  \def\clap#1{\hbox to 0pt{\hss#1\hss}}
\providecommand{\mat}[1]{\bm{#1}}%
\renewcommand{\vec}[1]{\mathbf{#1}}
\newcommand{\vecalt}[1]{\bm{#1}}
\providecommand{\mA}{\ensuremath{\mat{A}}}
\providecommand{\mB}{\ensuremath{\mat{B}}}
\providecommand{\mC}{\ensuremath{\mat{C}}}
\providecommand{\mE}{\ensuremath{\mat{E}}}
\providecommand{\mI}{\ensuremath{\mat{I}}}
\providecommand{\mW}{\ensuremath{\mat{W}}}
\providecommand{\mLambda}{\ensuremath{\mat{\Lambda}}}
\providecommand{\mSigma}{\ensuremath{\mat{\Sigma}}}
\providecommand{\mzero}{\ensuremath{\mat{0}}}
\providecommand{\vb}{\ensuremath{\vec{b}}}
\providecommand{\vc}{\ensuremath{\vec{c}}}
\providecommand{\vk}{\ensuremath{\vec{k}}}
\providecommand{\vw}{\ensuremath{\vec{w}}}
\providecommand{\vx}{\ensuremath{\vec{x}}}
\providecommand{\vmu}{\ensuremath{\vecalt{\mu}}}
\newcommand{\hvw}{\hat{\vw}}
\newcommand{\sI}{\mathcal{I}}
\newcommand{\sK}{\mathcal{K}}
\newcommand{\sO}{\mathcal{O}}
\newcommand{\sS}{\mathcal{S}}
\newcommand{\Exp}[1]{\mathbb{E}\left[#1\right]}
\newcommand{\Cov}[1]{\mathbb{C}\operatorname{ov}\left[#1\right]}
\newcommand{\Var}[1]{\operatorname{Var}\left[#1\right]}
\newcommand{\Prob}[1]{\mathbb{P}[#1]}
\newcommand{\indep}{\rotatebox[origin=c]{90}{$\models$}}
\newcommand{\bmat}[1]{\begin{bmatrix}#1\end{bmatrix}}
\newcommand{\dist}[2]{\mathrm{dist}\,\left(\colspan{#1},\,\colspan{#2}\right)}
\newcommand{\colspan}[1]{\operatorname{colspan}(#1)}
\newcommand{\MSE}[1]{\operatorname{MSE}\left[#1\right]}
\newcommand{\Bias}[1]{\operatorname{Bias}\left[#1\right]}
\newcommand{\SDRS}{\mathcal{S}_{\operatorname{DRS}}}
\newcommand{\CIR}{\mC_{\operatorname{IR}}}
\newcommand{\CSIR}{\mC_{\operatorname{SIR}}}
\newcommand{\hCSIR}{\hat{\mC}_{\operatorname{SIR}}}
\newcommand{\CAVE}{\mC_{\operatorname{AVE}}}
\newcommand{\CSAVE}{\mC_{\operatorname{SAVE}}}
\newcommand{\hCSAVE}{\hat{\mC}_{\operatorname{SAVE}}}
\newcommand{\Bind}{B_{\text{ind}}}
\newcommand{\Nrmin}{N_{r_{\min}}}
\newtheorem{prob}{Problem}
\newtheorem{assume}{Assumption}
\newtheorem{ex}{Example}
\numberwithin{theorem}{section}
\newcommand{\TheTitle}{Inverse regression for ridge recovery II:  Numerics} 
\newcommand{\TheAuthors}{A. Glaws, P. G. Constantine, and R. D. Cook}
\headers{\TheTitle}{\TheAuthors}
\title{{\TheTitle}\thanks{Submitted to the editors February 1, 2018.
\funding{The first author's work was supported by the Ben L.~Fryrear Ph.D.~Fellowship in Computational Science and the DARPA DSO Enabling Quantification of Uncertainty in Physical Systems program. The second author's work was supported by the DARPA DSO Enabling Quantification of Uncertainty in Physical Systems program and the U.S. Department of Energy Office of Science, Office of Advanced Scientific Computing Research, Applied Mathematics program under Award Number DE-SC-0011077.}}}
\author{
Andrew Glaws%
\thanks{Department of Computer Science, University of Colorado, Boulder, CO (\email{andrew.glaws@colorado.edu}).}
\and
Paul G.~Constantine%
\thanks{Department of Computer Science, University of Colorado, Boulder, CO
(\email{paul.constantine@colorado.edu}).}
\and
R.~Dennis Cook%
\thanks{Department of Applied Statistics, University of Minnesota, St. Paul, MN (\email{dennis@stat.umn.edu}, \url{http://users.stat.umn.edu/\string~rdcook/}).}
}
\begin{document}

\maketitle

\begin{abstract}
\emph{The content of the following paper has been combined with 'Inverse regression for ridge recovery: A data-driven approach for parameter space dimension reduction in computational science models' (\url{https://arxiv.org/abs/1702.02227}).}

We investigate the application of sufficient dimension reduction (SDR) to a noiseless data set derived from a deterministic function of several variables. In this context, SDR provides a framework for \emph{ridge recovery}. In this second part, we explore the numerical subtleties associated with using two \emph{inverse regression} methods---sliced inverse regression (SIR) and sliced average variance estimation (SAVE)---for ridge recovery. This includes a detailed numerical analysis of the eigenvalues of the resulting matrices and the subspaces spanned by their columns. After this analysis, we demonstrate the methods on several numerical test problems.
\end{abstract}

\begin{keywords}
sufficient dimension reduction, inverse regression, ridge recovery, ridge functions
\end{keywords}

\begin{AMS}
62B99, 65C60, 65C05
\end{AMS}

\section{Introduction}
\label{sec:intro}

This paper is the second part of an investigation into the viability and applicability of inverse regression---a class of techniques for sufficient dimension reduction (SDR) in statistical regression~\cite{Cook98}---as a dimension reduction tool for high-dimensional problems that arise in uncertainty quantification (UQ)~\cite{SmithUQ2013,SullivanUQ2015,HandbookUQ}. Part I~\cite{Glaws17a} translates the theory of SDR and inverse regression to the context of deterministic function approximation, which is a more appropriate context than regression for representing computer simulations of physical systems found in UQ---as is common in the statistical context of \emph{computer experiments}~\cite{Sacks89,Koehler96,dace2003}. In particular, we show how the population matrices from sliced inverse regression (SIR)~\cite{Li91} and sliced average variance estimation (SAVE)~\cite{Cook91}---whose eigenspaces define dimension reduction subspaces---can each be interpreted as a matrix of integrals. In this part, we interpret the SIR and SAVE algorithms, applied in the approximation context, as numerical methods for estimating the integrals. This interpretation allows us to apply error analysis for numerical integration to study convergence of SIR and SAVE for dimension reduction. 

To our knowledge, this is the first work to (i) interpret the moments of the inverse regression as integrals and (ii) study the inverse regression algorithms as numerical approximation methods for integrals. Such interpretation and study are enabled by interpreting the data for SIR and SAVE as coming from random point queries of a deterministic function as opposed to the standard statistical interpretation of independent samples from a joint predictor/response density; see Part I~\cite{Glaws17a}. Convergence analysis for SIR and SAVE in the SDR literature follows standard statistical arguments to show \emph{root-$N$ consistency} of the estimators; see Li~\cite{Li91} for SIR and Cook for SAVE~\cite{Cook00a}. More precisely, the statistical estimates converge in probability to the true but unknown parameters with a rate of $N^{-1/2}$ as the number $N$ of predictor/response pairs increases; see Definition 8.1 in Lehmann and Casella~\cite{Lehmann1998}. One could rightly argue that our analysis in this paper is just mean-squared convergence of the same estimators in a slightly different context; see, e.g., Theorem 8.2 in Lehmann and Casella~\cite{Lehmann1998}. In fact, the rates of convergence for numerical approximation that we show for the SIR and SAVE subspaces are similarly (and not surprisingly) $\mathcal{O}(N^{1/2})$. However, one novel result from our analysis that, to our knowledge, has not appeared in the SDR literature is that the error in the estimated subspaces depends inversely on the associated spectral gap in the true matrix of integrals; this result employs arguments from stochastic eigenspace perturbation theory~\cite{Stewart90} and gives insight into the accuracy of the algorithms. 

Beyond this insight, however, what does the interpretation of SIR and SAVE as numerical integration provide? Importantly, it suggests that, for certain scenarios, there may be more efficient (i.e., same accuracy with fewer function evaluations) numerical methods for estimating the matrices of integrals. For example, we explore Gauss quadrature methods for inverse regression in~\cite{Glaws17c}. The current manuscript provides the theoretical basis for such explorations.

The rest of this paper is structured as follows. Section \ref{sec:SDR} briefly reviews SDR theory from Part I and introduces the SIR and SAVE algorithms for dimension reduction in statistical regression. Section \ref{sec:ridgerecovery} reviews the ridge recovery problem and contains a detailed numerical analysis of SIR and SAVE for ridge recovery. In Section \ref{sec:numericalresults} we apply SIR and SAVE for ridge recovery to three test problems. The first two are quadratic functions of 10 variables and the third is a simplified model of magnetohydrodynamics with five input parameters.

\section{Inverse regression and sufficient dimension reduction}
\label{sec:SDR}

The regression problem begins with predictor/response pairs $\{ [ \, \vx_i^T \, , \, y_i \, ] \}$, $i=1,\dots,N$, where $y_i \in \mathbb{R}$ and $\vx_i \in \mathbb{R}^m$, that are assumed to be realizations of the random vector $[ \, \vx^T \, , \, y \, ]$ with unknown joint density $\pi_{\vx,y}$. The goal of the regression problem is to statistically characterize the conditional random variable $y|\vx$. SDR attempts to reduce the dimension of the predictor space using a \emph{dimension reduction subspace}~\cite[Chapter 6]{Cook98}. If $\mA \in \mathbb{R}^{m \times n}$, $n \leq m$ is such that 
\begin{equation}
\label{eq:cond_indep_DRS}
y \, \indep \, \vx | \mA^T \vx,
\end{equation}
then $\SDRS = \colspan{\mA}$ is a dimension reduction subspace (DRS). The property \eqref{eq:cond_indep_DRS} is  \emph{conditional independence}. To obtain a well-posed SDR problem, we define the \emph{central subspace}, denoted by $\sS_{y|\vx}$, to be the DRS such that
\begin{equation}
\sS_{y|\vx} \subseteq \SDRS
\end{equation} 
for any other DRS $\SDRS$, provided that such a subspace exists~\cite{Cook96}. In this sense, the central subspace is the smallest DRS exhibited by a regression problem. Existence of the central subspace can be guaranteed under a variety of conditions~\cite[Chapter 6]{Cook98}. For our purposes, we consider an existence condition related to the marginal density of the predictors $\pi_{\vx}$. In short, if the support of $\pi_{\vx}$ is convex, then the associated regression problem exhibits a central subspace.

In~\cite{Glaws17a}, we introduced the weak assumption of standardized predictors, which simplifies discussion of SIR and SAVE. We restate this assumption here for reference, and unless otherwise noted we assume standardized predictors in the remainder.
\begin{assume}[Standardized predictors]
\label{assume:standardized_predictors}
Assume that $\vx$ is standardized so that 
\begin{equation}
\Exp{\vx} = \mzero \quad \text{and} \quad \Cov{\vx} = \mI .
\end{equation}
\end{assume}
The \emph{SDR problem} can be stated as estimating the central subspace for a regression problem from the given predictor/response pairs. We restate the SDR problem for reference.
\begin{prob}[SDR problem]
\label{prob:SDR}
Given predictor/response pairs $\{ [ \, \vx_i^T \, , \, y_i \, ] \}$, with $i=1,\dots,N$, assumed to be independent draws from a joint density $\pi_{\vx,y}$, compute a basis $\mA \in \mathbb{R}^{m \times n}$ for the central subspace $\sS_{y|\vx}$ of the random variable $y|\vx$.
\end{prob}
Before we examine the SIR and SAVE algorithms for Problem \ref{prob:SDR}, we discuss an important property of the central subspace. Consider applying a function $h: \mathbb{R} \rightarrow \mathbb{R}$ to the regression responses $\{y_i\}$ to produce a new regression problem. That is, we now have predictor/response pairs of the form
\begin{equation}
\{ [ \, \vx_i^T \, , \, h(y_i) \, ] \} , \qquad i=1,\dots,N,
\end{equation}
and we wish to study the conditional random variable $h(y)|\vx$. The central subspace associated with the new regression problem is contained within the original central subspace,
\begin{equation}
\label{eq:cent_sub_mapped_response}
\sS_{h(y)|\vx}
\;\subseteq\;
\sS_{y|\vx} ,
\end{equation}
with equality holding when $h$ is strictly monotone. The following example from~\cite[Chapter 6]{Cook98} shows that these subspaces need not always be equal.
\begin{ex}
Let $y|\vx$ be normally distributed with $\Exp{y|\vx} = 0$ and $\Var{y|\vx} = \Var{y| \mA^T \vx}$ where $\mA \in \mathbb{R}^{m \times n}$ contains the basis for the central subspace for $y|\vx$. Assume that $h(y)$ maps negative values of $y$ to $h_1\in\mathbb{R}$ and non-negative values of $y$ to $h_2\in\mathbb{R}$. Then, $\Prob{h(y) = h_1 | \vx} = \Prob{h(y) = h_1} = 1/2$ for all $\vx$, which implies that $\sS_{h(y)|\vx} = \{ \mzero \}$. However, $\sS_{y|\vx} \neq \{ \mzero \}$ in general. Thus, $\sS_{h(y)|\vx}$ is a strict subset of $\sS_{y|\vx}$.
\end{ex}

Next we examine the SIR and SAVE algorithms for the SDR problem \ref{prob:SDR}. These algorithms use the given data to estimate population matrices related to the $m$-dimensional conditional random variable $\vx|y$, referred to as the \emph{inverse regression}. Using $\vx|y$ to perform dimension reduction on $y|\vx$ is main feature of the SIR and SAVE algorithms. The inverse regression allows us to consider $m$ one-dimensional regression problems rather than one $m$-dimensional regression problem.

\subsection{Sliced inverse regression}
\label{subsec:SIR}

The population matrix of SIR is
\begin{equation}
\label{eq:CIR_reg}
\CIR
\;=\;
\Cov{\Exp{\vx|y}} ,
\end{equation} 
where the notation ``$\CIR$'' emphasizes that this population matrix does not include the \emph{slicing} component of SIR. Under the linearity condition (Condition 1 in~\cite{Glaws17a}), the column space of $\CIR$ is contained within the central subspace,
\begin{equation}
\label{eq:CIR_in_CentSub}
\colspan{\CIR}
\;\subseteq\;
\sS_{y|\vx} .
\end{equation}
The linearity condition can be difficult to verify. However, a generalization of it is satisfied when the marginal density of the predictors $\pi_{\vx}$ is elliptically symmetric---e.g., when $\pi_{\vx}$ is a multivariate Gaussian density. We assume elliptic symmetry holds for the discussion in this section.

We want to approximate $\CIR$ using the predictor/response pairs given in the regression problem. The difficulty in this approximation lies in computing sample estimates of $\Exp{\vx|y}$ for point values of the response $y$. To address this issue, SIR applies a sliced mapping of the response as follows. Partition the observed response space,
\begin{equation}
\label{eq:slices}
y_{\min} = \tilde{y}_0 < \tilde{y}_1 < \dots < \tilde{y}_{R-1} < \tilde{y}_{R} = y_{\max},
\end{equation}
where
\begin{equation}
y_{\min} = \min_{1 \leq i \leq N} y_i \quad \text{and} \quad y_{\max} = \max_{1 \leq i \leq N} y_i .
\end{equation}
For $r = 1,\dots,R$, let $J_r = [\tilde{y}_{r-1}, \tilde{y}_r]$ denote the $r$th slice of the partition, and define the function
\begin{equation}
\label{eq:hy_r}
h(y) \;=\; r
\quad \mbox{for} \quad
y \in J_r .
\end{equation}
We are interested in solving the SDR problem \ref{prob:SDR} for the conditional random variable $h(y)|\vx$. From \eqref{eq:cent_sub_mapped_response}, we know that this central subspace is contained within $\sS_{y|\vx}$. 

The SIR algorithm computes a sample estimate of the population matrix
\begin{equation}
\CSIR
\;=\;
\Cov{\Exp{\vx|h(y)}} ,
\end{equation}
where the notation ``$\CSIR$'' indicates that this matrix is with respect to the sliced version of the original regression problem. From \eqref{eq:CIR_in_CentSub}, we know that the column space of $\CSIR$ is contained with the central subspace $\sS_{h(y)|\vx}$ for the associated regression problem. Combining this with \eqref{eq:cent_sub_mapped_response} gives
\begin{equation}
\label{eq:SIR_sub_contain}
\colspan{\CSIR}
\;\subseteq\;
\sS_{h(y)|\vx} 
\;\subseteq\;
\sS_{y|\vx} .
\end{equation}
The sliced partition of the response space and the sliced mapping $h$ effectively bin the samples of the response, which enables computation of sample estimates of $\Exp{\vx|h(y)}$ over each slice. This is the essential idea behind the SIR algorithm provided in Algorithm \ref{alg:SIR}.
\begin{algorithm}
\caption{Sliced inverse regression \cite{Li91}} \label{alg:SIR}
\textbf{Given:} $N$ samples $\{ [ \, \vx_i^T \, , \, y_i \, ] \}$, $i = 1,\dots, N$, drawn independently according to $\pi_{\vx,y}$, and an integer $n$. \\
\textbf{Assumptions:} Assumption~\ref{assume:standardized_predictors} holds and the marginal density $\pi_{\vx}$ is elliptically symmetric.
\begin{enumerate}
\item Define a partition of the response space as in \eqref{eq:slices}, and let $J_r = [\tilde{y}_{r-1}, \tilde{y}_r]$ for $r = 1,\dots,R$. Let $\sI_r \subset \{ 1,\dots,N \}$ be the set of indices for which $y_i \in J_r$ and let $N_r=|\sI_r|$.
\item For $r = 1,\dots,R$, compute the sample mean $\hat{\vmu}_r$ of the predictors whose associated responses are in $J_r$,
\begin{equation} \label{eq:sample_slice_average}
\hat{\vmu}_r = \frac{1}{N_r} \sum_{i \in \sI_r} \vx_i .
\end{equation}
\item Compute the weighted sample covariance matrix
\begin{equation}
\label{eq:hDSIR}
\hCSIR \;=\; \frac{1}{N} \sum_{r=1}^R N_r \, \hat{\vmu}_r \, \hat{\vmu}_r^T .
\end{equation}
\item Compute the eigendecomposition,
\begin{equation}
\hCSIR \;=\; \hat{\mW} \hat{\mLambda} \hat{\mW}^T,
\end{equation}
where the eigenvalues are in descending order $\hat{\lambda}_1 \geq \hat{\lambda}_2 \geq \dots \geq \hat{\lambda}_m \geq 0$ and the eigenvectors are orthonormal.
\item Let $\hat{\mA} \in \mathbb{R}^{m \times n}$ be the first $n$ eigenvectors of $\hCSIR$.
\end{enumerate}
\end{algorithm} 

Eigenvectors of $\CSIR$ associated with nonzero eigenvalues provide a basis for the SIR subspace, $\colspan{\CSIR}$. If the approximated eigenvalues $\hat{\lambda}_{n+1},\dots,\hat{\lambda}_m$ from Algorithm \ref{alg:SIR} are small, then $m \times n$ matrix $\hat{\mA}$ approximates a basis for this subspace. However, determining the appropriate $n$ for dimension reduction can be difficult in practice; to sidestep this issue, we assume $n$ is an input to SIR. Li~\cite{Li91} and Cook~\cite{Cook91} propose significance tests based on the distribution of the average of the $m-n$ trailing estimated eigenvalues. These testing methods extend to the SAVE algorithm; see section \ref{subsec:SAVE}.

For a fixed number of slices, SIR has been shown to be $N^{-1/2}$-consistent for estimating $\colspan{\CSIR}$~\cite{Li91}. In principle, increasing the number of slices may provide improved estimation of the central DRS. However, in practice, the success of Algorithm~\ref{alg:SIR} has been found to be relatively insensitive to the number of slices. A good heuristic is to choose the number of slices such that each slice contains enough samples to estimate the conditional expectation accurately. For this reason, Li~\cite{Li91} suggests constructing slices such that the response samples are distributed nearly equally.

\subsection{Sliced average variance estimation}
\label{subsec:SAVE}

The population matrix of SAVE is
\begin{equation}
\label{eq:CAVE_reg}
\CAVE
\;=\;
\Exp{\left( \mI - \Cov{\vx|y} \right)^2} .
\end{equation}
When the marginal density $\pi_{\vx}$ is elliptically symmetric,  the column space of $\CAVE$ is contained within the regression's central subspace,
\begin{equation}
\label{eq:CAVE_in_CentSub}
\colspan{\CAVE}
\;\subseteq\;
\sS_{y|\vx} .
\end{equation}
We want to estimate $\CAVE$ using the given predictor response pairs. Similar to section \ref{subsec:SIR}, SAVE introduces slicing to enable this estimation. Let $J_r$, $r = 1,\dots,R$ denote a sliced partition from \eqref{eq:slices}, and define $h(y)$ as in \eqref{eq:hy_r}. Considering dimension reduction with respect to $h(y)|\vx$ results in
\begin{equation}
\label{eq:CSAVE}
\CSAVE
\;=\;
\Exp{\left( \mI - \Cov{\vx|h(y)} \right)^2} .
\end{equation}
The notation ``$\CSAVE$'' indicates that we are considering the sliced counterpart of the original regression problem. Combining \eqref{eq:CAVE_in_CentSub} and \eqref{eq:cent_sub_mapped_response},
\begin{equation}
\colspan{\CSAVE}
\;\subseteq\;
\sS_{h(y)|\vx}
\;\subseteq\;
\sS_{y|\vx} .
\end{equation}
Algorithm \ref{alg:SAVE} contains the SAVE algorithm, which computes the column span for the sample approximation $\hCSAVE$. This basis has been shown to be $N^{-1/2}$ consistent for a basis of the SAVE subspace, $\colspan{\CSAVE}$~\cite{Cook00a}. Increasing the number of slices improves the estimate but suffers the same drawbacks as in SIR. In practice, SAVE has been shown to perform poorly compared to SIR when relatively few predictor/response pairs are available. This is due to difficulties approximating the covariance within the slices with few samples. For this reason, Cook~\cite{Cook09} suggests trying both methods to approximate the central DRS.

\begin{algorithm}[]
\caption{Sliced average variance estimation \cite{Cook00a}} \label{alg:SAVE}
\textbf{Given:} $N$ samples $\{ [ \, \vx_i^T \, , \, y_i \, ] \}$, $i = 1,\dots, N$, drawn independently according to $\pi_{\vx,y}$ and an integer $n$. \\
\textbf{Assumptions:} Assumption~\ref{assume:standardized_predictors} holds and the marginal density $\pi_{\vx}$ is elliptically symmetric.
\begin{enumerate}
\item Define a partition of the response space as in \eqref{eq:slices}, and let $J_r = [\tilde{y}_{r-1}, \tilde{y}_r]$ for $r = 1,\dots,R$. Let $\sI_r \subset \{ 1,\dots,N \}$ be the set of indices $i$ for which $y_i \in J_r$ and let $N_r=|\sI_r|$.
\item For $r = 1,\dots,R$,
\begin{enumerate}
\item Compute the sample mean $\hat{\vmu}_r$ of the predictors whose associated responses are in the $J_r$,
\begin{equation}
\hat{\vmu}_r = \frac{1}{N_r} \sum_{i \in \sI_r} \vx_i .
\end{equation}
\item Compute the sample covariance $\hat{\mSigma}_r$ of the predictors whose associated responses are in $J_r$,
\begin{equation}
\hat{\mSigma}_r = \frac{1}{N_r - 1} \sum_{i \in \sI_r} \left( \vx_i - \hat{\vmu}_r \right) \left( \vx_i - \hat{\vmu}_r \right)^T
\end{equation}
\end{enumerate}
\item Compute the matrix,
\begin{equation}
\label{eq:hDSAVE}
\hCSAVE \;=\; \frac{1}{N} \sum_{r=1}^R N_r \left( \mI - \hat{\mSigma}_r \right)^2 .
\end{equation}
\item Compute the eigendecomposition,
\begin{equation}
\hCSAVE \;=\; \hat{\mW} \hat{\mLambda} \hat{\mW}^T,
\end{equation}
where the eigenvalues are in descending order $\hat{\lambda}_1 \geq \hat{\lambda}_2 \geq \dots \geq \hat{\lambda}_m \geq 0$ and the eigenvectors are orthonormal.
\item Let $\hat{\mA} \in \mathbb{R}^{m \times n}$ be the first $n$ eigenvectors of $\hCSAVE$.
\end{enumerate}
\end{algorithm}

SIR and SAVE look for dimension reduction in regression problems where the predictor/response pairs are assumed to be independent draws from an unknown joint density. In the next section, we consider SIR and SAVE as methods for ridge recovery in the context of deterministic function approximation, where we seek to understand the numerical behavior of these algorithms.

\section{Ridge recovery}
\label{sec:ridgerecovery}

In this section, we consider functions of the form
\begin{equation}
\label{eq:y_fx}
y \;=\; f(\vx) ,
\qquad
\vx \in \mathbb{R}^m ,
\quad
y \in \mathbb{R} ,
\end{equation}
where $y$ is the scalar-valued output and $\vx$ is a vector of inputs. Additionally, we assume the input space is weighted by a known probability measure $\rho$. This setup is a typical mathematical description of the models that arise in computer experiments~\cite{Koehler96,Sacks89,dace2003}. Typical choices for $\rho$ in computer experiments include the multivariate Gaussian or the uniform measure over an $m$-dimensional rectangle defined by independent ranges of each component of $\vx$. We assume that $\rho$ has density function $p:\mathbb{R}^m \rightarrow \mathbb{R}^+$ and that the components of $\vx$ are independent. We note that the output space $\mathbb{R}$ is weighted by the push-forward probability measure $\gamma$, defined by $\rho$ and $f$.


The function $f: \mathbb{R}^m \rightarrow \mathbb{R}$ is a \emph{ridge function} if there exists $\mA \in \mathbb{R}^{m \times n}$ with $n < m$ such that
\begin{equation}
\label{eq:y_fx_gAx}
y
\;=\;
f(\vx)
\;=\;
g(\mA^T \vx) 
\end{equation}
for some $g: \mathbb{R}^n \rightarrow \mathbb{R}$~\cite{Pinkus15}. The column span of $\mA$ defines a DRS for $f$. Furthermore, Theorem 4.1 from~\cite{Glaws17a} shows that the central subspace from Section \ref{sec:SDR} can be extended to ridge functions. We denote the central subspace in this context by $\sS_{f,\rho}$ as it depends on both the function $f$ and the probability measure $\rho$. This subspace is the minimum dimension subspace that contains all the ridge information about $f$. The central subspace is guaranteed to exist for deterministic functions provided that the input density function $p$ has convex support. Since the input weighting is given, we can easily verify the existence of a central subspace.

The problem of discovering a basis for the central subspace of a ridge function is known as \emph{ridge recovery}.
\begin{prob}[Ridge recovery] \label{prob:ridge_recovery}
Given an input probability measure $\rho$ and a ridge function $f: \mathbb{R}^m \rightarrow \mathbb{R}$, compute a basis $\mA \in \mathbb{R}^{m \times n}$ for the central subspace $\sS_{f,\rho}$ using only point queries $(\vx, f(\vx))$.
\end{prob}
In the following sections, we examine SIR and SAVE as methods to solve or approximate the ridge recovery problem. Due to the deterministic nature of the problem, we interpret these algorithms as numerical approximation methods and study their convergence properties. 

Recall from section \ref{sec:SDR}, that SIR and SAVE employ the inverse regression $\vx|y$. This is a $m$-dimensional random vector parameterized by the scalar-valued response $y$. For deterministic functions, the analogous concept is the inverse image $f^{-1} (y) = \{ \, \vx \in \mathbb{R}^m \, : \, f(\vx) = y \, \}$. The inverse image is weighted by the conditional probability measure $\sigma_{\vx|y}$, defined as the restriction of $\rho$ to $f^{-1} (y)$~\cite{Chang1997}.

Before analyzing SIR and SAVE for ridge recovery, we introduce a lemma that we use in the proofs. The lemma allows us to quickly analyze the asymptotic behavior of sums of expectations over complicated multi-index sets. Define the multi-index set
\begin{equation} \label{eq:K_multi-index_set}
\sK^{p,q} (r)
\;=\;
\left\{ \vk \in \mathbb{N}^{p+q} \left| \begin{matrix} k_1 , \dots k_p \in \left\{ 1, \dots , N \right\} \\
\text{ and } k_{p+1} , \dots , k_{p+q} \in \left\{ 1 , \dots , N_r \right\} \end{matrix} \right. \right\} 
\end{equation}
where $k_i$ denotes the $i$th entry of $\vk$ and $N_r \leq N$. Let $\vb \in \mathbb{R}^{p+q}$ be a random vector and let $\left\{ \vb^k \right\}$, $k=1,\dots,N$, denote $N$ independently drawn realizations of $\vb$ where $N$ is from \eqref{eq:K_multi-index_set}. Define the $(p+q)$-dimensional tensor $\mB^{p,q} (r)$ whose elements are given by
\begin{equation} \label{eq:pq_tensor}
\mB^{p,q}_{\vk} (r)
\;=\;
\Exp{b_1^{k_1} \dots b_p^{k_p} b_{p+1}^{k_{p+1}} \dots b_{p+q}^{k_{p+q}}} 
\end{equation}
for $\vk \in \sK^{p,q} (r)$. Note that the $k_i$'s above do not indicate powers of $b_j$, but rather indices. That is, $b_j^k$ denotes the $j$th entry of $\vb^k$ which is the $k$th realization of the random vector $\vb$. Thus, $\mB^{p,q} (r)$ has dimensions
\begin{equation}
\mB^{p,q} (r)
\in
\mathbb{R}^{\overbrace{N \times \dots \times N}^{p} \times \overbrace{N_r \times \dots \times N_r}^{q}} .
\end{equation}

The following lemma plays an important role in upcoming proofs.
\begin{lemma}
\label{lem:general_B_sum}
Let $\vb \in \mathbb{R}^{p+q}$ be a random vector and let $\left\{ \vb^k \right\}$, $k=1,\dots,N$, denote $N$ independently drawn realizations of $\vb$. Define $N_r \in \mathbb{N}$ such that $p+q \leq N_r \leq N$. Let $\sK^{p,q} (r)$ and $\mB^{p,q} (r)$ be defined according to \eqref{eq:K_multi-index_set} and \eqref{eq:pq_tensor}, respectively. Then,
\begin{equation}
\sum_{\vk \in \sK^{p,q} (r)} \mB^{p,q}_{\vk} (r) \;=\; N^p N_r^q \, \Exp{b_1} \dots \Exp{b_p} \Exp{b_{p+1}} \dots \Exp{b_{p+q}} + \sO (N^p N_r^{q-1} + N^{p-1} N_r^q).
\end{equation}
\end{lemma}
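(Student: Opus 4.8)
The plan is to split the index set $\sK^{p,q}(r)$ into the multi-indices whose $p+q$ entries are pairwise distinct and those exhibiting at least one repeated entry (a ``collision''), show that the distinct multi-indices account for the stated leading term, and check that the collisions are absorbed into the error. Throughout I assume that the $(p+q)$th moments of the components of $\vb$ are finite --- this holds in every application of the lemma in what follows, since there $\vb$ is assembled from products of components of $\vx$, which has all moments under a Gaussian weight and is bounded under a uniform weight.

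First I would handle the distinct multi-indices. If $\vk \in \sK^{p,q}(r)$ has pairwise distinct entries, then $b_1^{k_1}, \dots, b_{p+q}^{k_{p+q}}$ are components of $p+q$ distinct realizations of $\vb$, hence mutually independent; since $b_j^{k_j}$ has the same distribution as $b_j$, the expectation in \eqref{eq:pq_tensor} factors as $\mB^{p,q}_{\vk}(r) = \Exp{b_1}\cdots\Exp{b_{p+q}}$, a constant independent of $\vk$. Next comes the counting: of the $N^p N_r^q$ multi-indices in $\sK^{p,q}(r)$, a collision occurs in one of three ways --- two of the first $p$ indices agree, two of the last $q$ agree, or one of the first $p$ agrees with one of the last $q$ --- and fixing the offending pair removes one free index, yielding at most $\binom{p}{2}N^{p-1}N_r^q$, $\binom{q}{2}N^pN_r^{q-1}$, and $pq\,N^{p-1}N_r^q$ such tuples, respectively (in the mixed case the shared index is forced into $\{1,\dots,N_r\}$, and $N_r \le N$). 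Thus there are $\sO(N^{p-1}N_r^q + N^pN_r^{q-1})$ collision multi-indices, so there are $N^pN_r^q + \sO(N^{p-1}N_r^q + N^pN_r^{q-1})$ distinct ones.

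Summing the constant $\Exp{b_1}\cdots\Exp{b_{p+q}}$ over the distinct multi-indices then gives the leading term $N^pN_r^q\,\Exp{b_1}\cdots\Exp{b_{p+q}}$ up to an $\sO(N^{p-1}N_r^q + N^pN_r^{q-1})$ error. To control the collision multi-indices I would bound each entry uniformly: by repeated use of H\"older's inequality, $|\mB^{p,q}_{\vk}(r)| \le \prod_{j=1}^{p+q}\Exp{|b_j|^{p+q}}^{1/(p+q)}$, which is finite and independent of $\vk$ and $r$ by the moment assumption, so the collision contribution is (constant)$\,\times\,\sO(N^{p-1}N_r^q + N^pN_r^{q-1})$. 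Adding the two contributions gives the claimed identity; the hypothesis $p+q \le N_r$ guarantees that the set of distinct multi-indices is nonempty, so the leading term is genuinely present.

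The step I expect to require the most care is the combinatorial bookkeeping --- pinning down the three collision types and verifying that each contributes $\sO(N^{p-1}N_r^q + N^pN_r^{q-1})$ tuples (in particular that the mixed collisions do not produce a larger term because the shared index is confined to $\{1,\dots,N_r\}$) --- along with being explicit about the moment hypothesis on $\vb$ that makes the per-entry bound uniform.
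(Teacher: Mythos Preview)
Your proposal is correct and follows essentially the same strategy as the paper's proof: split $\sK^{p,q}(r)$ into the multi-indices with pairwise distinct entries and those with at least one collision, factor the expectation by independence on the distinct part, and count the collisions via the same three cases (two among the first $p$, two among the last $q$, and one mixed). You are in fact slightly more careful than the paper in making the moment hypothesis explicit and using H\"older to bound $|\mB^{p,q}_{\vk}(r)|$ uniformly over the collision set, but the argument is otherwise identical.
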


\begin{proof}
Define the following subsets of $\sK^{p,q} (r)$:
\begin{equation}
\sK^{p,q}_1 (r)
\;=\;
\left\{
\vk \in \sK^{p,q} (r)
\left|
k_i \neq k_j \text{ for } i,j = 1, \dots , p+q \text{ and } i \neq j
\right.
\right\}
\end{equation}
and $\sK^{p,q}_2 (r) = \sK^{p,q} (r) \setminus \sK^{p,q}_1 (r)$. Asymptotically, as $N$ and $N_r$ tend towards infinity, the subset $\sK^{p,q}_1 (r)$ has
\begin{equation}
\left( \prod_{i=0}^{q-1} (N_r - i) \right)
\left( \prod_{j=q}^{p+q-1} (N - j) \right)
\;=\;
N^p N_r^q + \sO (N^p N_r^{q-1} + N^{p-1} N_r^q)
\end{equation}
elements. Since all of the indices are unique for any $\vk \in \sK^{p,q}_1 (r)$,
\begin{equation}
\mB^{p,q}_{\vk} (r)
\;=\;
\Exp{b_1} \dots \Exp{b_p} \Exp{b_{p+1}} \dots \Exp{b_{p+q}} .
\end{equation}

Consider the number of elements in $\sK^{p,q}_2 (r)$. By construction, any $\vk \in \sK^{p,q}_2 (r)$ must have at least two identical indices. As $N$ and $N_r$ tend towards infinity, the largest subset of elements in $\sK^{p,q}_2 (r)$ will be those with only two identical elements. There are three such cases: \\
Case 1: \\
Two of the $k_i$'s which range from $1,\dots,N$ are identical and all other $k_i$'s are unique. There are 
\begin{equation}
\left( \prod_{i=0}^{q-1} (N_r - i) \right)
\left( \prod_{j=q}^{p+q-2} (N - j) \right)
\;=\;
\sO (N^{p-1} N_r^q)
\end{equation} 
such elements. \\
Case 2: \\
Two of the $k_i$'s which range from $1,\dots,N_r$ are identical and all other $k_i$'s are unique. There are 
\begin{equation}
\left( \prod_{i=0}^{q-2} (N_r - i) \right)
\left( \prod_{j=q-1}^{p+q-2} (N - j) \right)
\;=\;
\sO (N^p N_r^{q-1})
\end{equation} 
such elements. \\
Case 3: \\
One $k_i$ which range from $1,\dots,N$ and one $k_i$ which ranges from $1,\dots,N_r$ are identical and all other $k_i$'s are unique. There are 
\begin{equation}
\left( \prod_{i=0}^{q-1} (N_r - i) \right)
\left( \prod_{j=q}^{p+q-2} (N - j) \right)
\;=\;
\sO (N^{p-1} N_r^q)
\end{equation} 
such elements.
Thus, there are $\sO(N^p N_r^{q-1} + N^{p-1} N_r^q)$ elements in $\sK^{p,q}_2 (r)$.

Therefore, 
\begin{equation}
\begin{aligned}
\sum_{\vk \in \sK^{p,q} (r)} \mB^{p,q}_{\vk} (r)
\;&=\;
\sum_{\vk \in \sK^{p,q}_1 (r)} \mB^{p,q}_{\vk} (r) + \sum_{\vk \in \sK^{p,q}_2 (r)} \mB^{p,q}_{\vk} (r) \\
\;&=\;
\left( N^p N_r^q + \sO (N^p N_r^{q-1} + N^{p-1} N_r^q) \right) \Exp{b_1} \dots \Exp{b_p} \Exp{b_{p+1}} \dots \Exp{b_{p+q}} \\
&\hspace{150pt}
+ \sO (N^p N_r^{q-1} + N^{p-1} N_r^q) \\
\;&=\;
N^p N_r^q \, \Exp{b_1} \dots \Exp{b_p} \Exp{b_{p+1}} \dots \Exp{b_{p+q}} + \sO (N^p N_r^{q-1} + N^{p-1} N_r^q),
\end{aligned}
\end{equation}
as required.
\end{proof}

\subsection{SIR for ridge recovery}
\label{subsec:SIR_for_RR}

In this section, we consider SIR (Algorithm \ref{alg:SIR}) as a method for ridge recovery (Problem \ref{prob:ridge_recovery}). We rewrite \eqref{eq:CIR_reg} as an integral over the output space
\begin{equation}
\label{eq:CIR}
\CIR
\;=\;
\int \vmu (y) \, \vmu (y)^T \, d \gamma(y) ,
\end{equation}
where $\gamma$ is the push-forward measure of $\rho$ through $f$ from the previous section. The conditional expectation $\vmu(y)$ is
\begin{equation}
\label{eq:cond_exp}
\vmu(y)
\;=\;
\int \vx \, d \sigma_{\vx|y} (\vx) ,
\end{equation}
where $\sigma_{\vx|y}$ is the conditional measure defined on the inverse image $f^{-1} (y)$. Approximating this measure is difficult, especially if the structure of $f$ is unknown. SIR considers a sliced-mapping of $f$ which enables the approximation of \eqref{eq:CIR} and \eqref{eq:cond_exp}.

Recall the sliced partitioning of the output space from \eqref{eq:slices}. We apply the same partitioning to the output space of $f$ here. Let $h: \mathbb{R} \rightarrow \mathbb{R}$ be defined as in \eqref{eq:hy_r}. We now consider the ridge recovery problem for $r = h(y) = h(f(\vx))$, where $r \in \{ 1,\dots,R \}$. The range of $h$ is weighted by the probability mass function
\begin{equation}
\label{eq:slice_density}
\omega_r
\;=\;
\int_{J_r} \, d \gamma (y) , \qquad r=1,\dots,R.
\end{equation}
Without loss of generality, we assume that the slices $\{J_r\}$ are constructed such that $\omega_r > 0$ for all values of $r$. If $\omega_r = 0$ for some value of $r$, then we can combine this slice with one of the adjacent slices without altering the problem. 

We rewrite the conditional expectation from \eqref{eq:cond_exp} in terms of the sliced output as
\begin{equation}
\label{eq:cond_exp_r}
\vmu_r
\;=\;
\int \vx \, d \sigma_{\vx|r} (\vx) ,
\end{equation}
where $\sigma_{\vx|r}$ is the conditional measure defined over the set $f^{-1} ( h^{-1} (r) ) =  \{ \, \vx \in \mathbb{R}^m \, : \, h(f(\vx)) = r \, \}$. Using \eqref{eq:slice_density} and \eqref{eq:cond_exp_r}, we can write the slice-based version of $\CIR$ as
\begin{equation}
\label{eq:CSIR}
\CSIR
\;=\;
\sum_{r=1}^R \omega_r \, \vmu_r \, \vmu_r^T .
\end{equation}
By Theorem 4.1 from~\cite{Glaws17a}, we know that properties of the central subspace extend to the ridge recovery problem. This includes containment of the central subspace under any mapping of the output. Thus,
\begin{equation}
\label{eq:SIR_sub_contain_det}
\colspan{\CSIR}
\;\subseteq\;
\sS_{h,\rho}
\;\subseteq\;
\sS_{f,\rho} .
\end{equation}
By approximating $\CSIR$, we obtain an approximation of at least part of the central subspace. 

The containment property in \eqref{eq:SIR_sub_contain_det} provides some insight into the value of SIR for ridge recovery. However, the function approximation context requires a more rigorous understanding of how $\CSIR$ approximates $\CIR$. We note that $\CSIR$ contains a finite sum approximation of the integral in $\CIR$. By recognizing that $f^{-1} ( h^{-1} (r)) = \cup_{y \in J_r} f^{-1} (y)$, we see that $\vmu_r$ is the average value of the conditional expectations with respect to $y$ over all $y \in J_r$. That is,
\begin{equation}
\label{eq:vmur}
\vmu_r
\;=\;
\int_{J_r} \vmu (y) \, d \gamma (y), \qquad r=1,\dots,R.
\end{equation}
Therefore, $\CSIR$ approximates $\CIR$ by a weighted sum of the average values of $\vmu(y)$ within each slice. If $y=f(\vx)$ is continuous, then $\vmu(y)$ is continuous almost everywhere with respect to the push forward measure $\gamma$. Therefore, $\CIR$ is Riemann integrable~\cite[Ch. 2]{Folland99}. This ensures that sum approximations using the supremum and infimum of $\vmu(y)$ over each slice converge to the same value as the number $R$ of slices increases. By the sandwich theorem, the average value  converges as well~\cite{Abbott01}. Therefore, we may consider $\CSIR$ to be a Riemann sum approximation of $\CIR$. Furthermore, by letting the number of slices $R$ increase to infinity, we get that $\CSIR$ converges to $\CIR$.

Algorithm \ref{alg:SIR} uses predictor/response pairs $\{ [ \, \vx_i^T \, , \, y_i \, ] \}$ for $i = 1,\dots,N$ to compute $\hCSIR$. In the statistical regression context, these pairs are  independent realizations drawn according to an unknown joint density $\pi_{\vx,y}$. For deterministic functions, we construct a similar set of input/output pairs by sampling the input space according to $\rho$ and evaluating $y = f(\vx)$ at each point. We can then use Algorithm \ref{alg:SIR} for ridge recovery. In this context, we interpret the SIR results as a simple Monte Carlo approximation of the integrals \eqref{eq:vmur} in $\CSIR$. This results in the random matrix $\hCSIR$. Anything computed using $\hCSIR$---including eigenvalues and eigenvectors---is also random. For this reason, the convergence analysis for Algorithm \ref{alg:SIR} in the context of ridge recovery is probabilistic.

We next study the convergence properties of the SIR algorithm for ridge recovery. Recall from Algorithm \ref{alg:SIR} that $N_r$ denotes the number of samples in each slice $J_r$ for $r = 1, \dots, R$. The convergence depends on the smallest number of samples per slice over all the slices. We denote this value by
\begin{equation} \label{eq:N_r_min}
N_{r_{\min}}
\;=\;
\min_{1 \leq r \leq R} N_r .
\end{equation}
Also, recall that the slices are assumed to be constructed such that $\omega_r > 0$. Thus, $N_{r_{\min}} > 0$ with probability 1 as $N \rightarrow \infty$. 

The following theorem shows that the eigenvalues of $\hCSIR$ converge to those of $\CSIR$ in a mean-squared sense.
\begin{theorem}
\label{thm:SIR_eig_converge}
Assume that Algorithm \ref{alg:SIR} has been applied to the data set $\{ [ \, \vx_i^T \, , \, f(\vx_i) \, ] \}$, with $i=1,\dots,N$, where the $\vx_i$ are drawn independently according to $\rho$. Then, for $k = 1,\dots,m$,
\begin{equation}
\Exp{\left( \lambda_k (\CSIR) - \lambda_k (\hCSIR) \right)^2}
\;=\;
\sO (N^{-1}_{r_{\min}})
\end{equation}
where $\lambda_k (\cdot)$ denotes the $k$th eigenvalue of the given matrix.
\end{theorem}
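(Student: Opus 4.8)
The plan is to bound the eigenvalue error by the matrix error in Frobenius (or spectral) norm via Weyl's inequality, and then control $\Exp{\|\CSIR - \hCSIR\|^2}$ by a bias--variance decomposition where both pieces are $\sO(N_{r_{\min}}^{-1})$. First I would invoke the standard perturbation bound $|\lambda_k(\CSIR) - \lambda_k(\hCSIR)| \le \|\CSIR - \hCSIR\|_2 \le \|\CSIR - \hCSIR\|_F$, which reduces the claim to showing $\Exp{\|\CSIR - \hCSIR\|_F^2} = \sO(N_{r_{\min}}^{-1})$. Since $\CSIR = \sum_r \omega_r \vmu_r \vmu_r^T$ and $\hCSIR = \frac{1}{N}\sum_r N_r \hat\vmu_r \hat\vmu_r^T$, I would write the difference slice-by-slice, split off the sample weights $N_r/N$ from the true weights $\omega_r$, and examine each term $\frac{N_r}{N}\hat\vmu_r\hat\vmu_r^T - \omega_r \vmu_r \vmu_r^T$ entrywise. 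The entries of $\hat\vmu_r \hat\vmu_r^T$ are double sums $\frac{1}{N_r^2}\sum_{i,j\in\sI_r} x_{i,a} x_{j,b}$, and conditioning on which samples land in each slice makes the $\vx_i$ restricted to $\sI_r$ i.i.d.\ draws from $\sigma_{\vx|r}$, so these are exactly the kinds of sums over the multi-index sets $\sK^{p,q}(r)$ that Lemma \ref{lem:general_B_sum} is built to handle (here with $p=0$ or $p=1$ and $q=1$ or $2$).

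The key steps, in order: (1) condition on the slice-membership counts $\{N_r\}$ and on which index set $\sI_r$ each sample belongs to, so that within slice $r$ the predictors are i.i.d.\ $\sigma_{\vx|r}$; (2) apply Lemma \ref{lem:general_B_sum} to the relevant entrywise sums to get $\Exp{\hat\vmu_{r,a}} = \vmu_{r,a}$ (unbiasedness of the slice mean) and $\Exp{\hat\vmu_{r,a}\hat\vmu_{r,b}} = \vmu_{r,a}\vmu_{r,b} + \sO(N_r^{-1})$, and similarly control second moments of products of four $\vx$-entries to get $\Var{\hat\vmu_{r,a}\hat\vmu_{r,b}} = \sO(N_r^{-1})$; (3) handle the weight discrepancy: $N_r/N$ is a sample average of Bernoulli$(\omega_r)$ indicators, so $\Exp{N_r/N} = \omega_r$ and $\Var{N_r/N} = \sO(N^{-1}) = \sO(N_{r_{\min}}^{-1})$, and since $\|\hat\vmu_r\hat\vmu_r^T\|_F$ is bounded in expectation this contributes $\sO(N_{r_{\min}}^{-1})$; (4) combine via $\Exp{\|\CSIR-\hCSIR\|_F^2} \le R \sum_r \Exp{\|\tfrac{N_r}{N}\hat\vmu_r\hat\vmu_r^T - \omega_r\vmu_r\vmu_r^T\|_F^2}$, take expectations over the slice counts (noting each term is $\sO(N_r^{-1}) \subseteq \sO(N_{r_{\min}}^{-1})$ with probability tending to $1$, and $R$ is fixed), and conclude.

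The main obstacle I anticipate is step (1)--(2) executed carefully: the conditioning argument needs the observation that, given $N_r$ points fall in slice $J_r$, those points are a simple random sample distributed as $\sigma_{\vx|r}$, which requires $f$ (equivalently $\gamma$) to be such that the slice event has positive probability --- exactly the standing assumption $\omega_r > 0$, which also guarantees $N_{r_{\min}} \to \infty$ a.s. Then the bookkeeping of which $\sO(N^p N_r^{q-1} + N^{p-1} N_r^q)$ term dominates must be tracked so that everything collapses to $\sO(N_{r_{\min}}^{-1})$ rather than a worse rate; here one uses $N^{-1} \le N_{r_{\min}}^{-1}$ and $N_r^{-1} \le N_{r_{\min}}^{-1}$. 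A secondary subtlety is that $N_r$ is random and could in principle be as small as $0$; this is why the statement is mean-squared and the conclusion holds in the $N\to\infty$ regime where $N_{r_{\min}} > 0$ with probability $1$ --- conditioning on the (asymptotically almost sure) event $\{N_{r_{\min}} \ge p+q\}$ makes Lemma \ref{lem:general_B_sum} applicable and absorbs the negligible complementary event into the error term.
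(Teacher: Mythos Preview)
Your plan is correct and follows essentially the same route as the paper: reduce to $\Exp{\|\CSIR-\hCSIR\|_F^2}$ via the Weyl-type bound $|\lambda_k(\CSIR)-\lambda_k(\hCSIR)|\le\|\mE\|_F$, then control the entrywise MSE of $\hCSIR$ slice by slice using Lemma~\ref{lem:general_B_sum}. The only organizational difference is that the paper keeps the weight and the slice-mean product together, applying Lemma~\ref{lem:general_B_sum} directly to $\hat\omega_r\hat\mu_{r,i}\hat\mu_{r,j}$ as a triple sum with $(p,q)=(1,2)$ for the first moment and $(p,q)=(2,4)$ for the second, whereas you separate $N_r/N$ from $\hat\vmu_r\hat\vmu_r^T$ and handle the Bernoulli fluctuation of the weight on its own; your explicit conditioning on slice membership is a cleaner justification of the step the paper performs implicitly when it writes ``assume WLOG $\sI_r=\{1,\dots,N_r\}$'' and then treats $N_r$ as fixed inside the sums.
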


\begin{proof}
Recall the SIR matrix from \eqref{eq:CSIR},
\begin{equation}
\label{eq:CSIR2}
\CSIR \;=\; \sum_{r=1}^R \omega_r \, \vmu_r \, \vmu_r^T
\end{equation}
where $\omega_r$ is the probability mass function from \eqref{eq:slice_density}. Algorithm \ref{alg:SIR} approximates $\CSIR$ using
\begin{equation}
\label{eq:hCSIR}
\hCSIR
\;=\;
\sum_{r=1}^R \hat{\omega}_r \, \hat{\vmu}_r \, \hat{\vmu}_r^T
\end{equation}
where $\hat{\omega}_r$ and $\hat{\vmu}_r$ are sample estimates of $\omega_r$ and $\vmu_r$, respectively,
\begin{equation}
\hat{\omega}_r
\;=\;
\frac{N_r}{N}
\qquad \text{and} \qquad
\hat{\vmu}_r
\;=\;
\frac{1}{N_r} \sum_{i \in \sI_r} \vx_i
\end{equation}
where $\sI_r$ is the set of indices for which $y_i \in J_r$ and $N_r$ is the cardinality of $\sI_r$. We may rewrite $\hat{\omega}_r$ as
\begin{equation}
\hat{\omega}_r
\;=\;
\frac{1}{N}  \sum_{i=1}^N \chi \left( y_i \in J_r \right)
\end{equation}
where $\chi \left( y_i \in J_r \right)$ is an indicator function that is 1 when $y_i \in J_r$ and 0 otherwise. 

Fix $r$ such that we are considering a single term of the summations in \eqref{eq:CSIR2} and \eqref{eq:hCSIR}. Let 
\begin{equation}
\omega \;=\; \omega_r , \quad
\hat{\omega} \;=\; \hat{\omega}_r , \quad
\chi_i \;=\; \chi \left( y_i \in J_r \right) , \quad
\vmu \;=\; \vmu_r , \quad \text{and} \quad 
\hat{\vmu} \;=\; \hat{\vmu}_r .
\end{equation}
Assume without loss of generality that $\sI_r = \{ 1 , \dots , N_r \}$. This simplifies notation as we compute the mean squared error.

To compute the mean squared error, we focus on the computation of a single element in $\hCSIR$. To do this, we use the following notation: let $x_i^k$ denote the $i$th element of the vector $\vx^k$ which is the $k$th realization of the random vector $\vx$. Thus, for $1 \leq i , j \leq m$,
\begin{equation} \label{eq:SIR_exp_of_prod1}
\begin{aligned}
\Exp{\hat{\omega} \, \hat{\mu}_i \, \hat{\mu}_j}
\;&=\;
\Exp{\left( \frac{1}{N} \sum_{k_1=1}^N \chi^{k_1} \right)
\left( \frac{1}{N_r} \sum_{k_2=1}^{N_r} x_i^{k_2} \right)
\left( \frac{1}{N_r} \sum_{k_3=1}^{N_r} x_j^{k_3} \right)} \\
\;&=\;
\frac{1}{N \, N_r^2} \sum_{k_1=1}^N \sum_{k_2=1}^{N_r} \sum_{k_3=1}^{N_r} \Exp{\chi^{k_1} \, x_i^{k_2} \, x_j^{k_3}} .
\end{aligned}
\end{equation}
Equations \eqref{eq:K_multi-index_set} and \eqref{eq:pq_tensor} allow us to rewrite \eqref{eq:SIR_exp_of_prod1} as a summation over a tensor. Since we assume $r$ is fixed, we drop the argument from the notation of these equations. Thus,
\begin{equation} \label{eq:tensor_sum}
\Exp{\hat{\omega} \, \hat{\mu}_i \, \hat{\mu}_j}
\;=\;
\frac{1}{N \, N_r^2} \sum_{\vk \in \sK^{1,2}} \mB^{1,2}_{\vk} .
\end{equation}
From Lemma \ref{lem:general_B_sum},
\begin{equation} \label{eq:SIR_exp_of_prod2}
\begin{aligned}
\Exp{\hat{\omega} \, \hat{\mu}_i \, \hat{\mu}_j}
\;&=\;
\frac{1}{N \, N_r^2} \sum_{\vk \in \sK^{1,2}} \mB^{1,2}_{\vk} \\
\;&=\;
\frac{1}{N \, N_r^2} \left[ N \, N_r^2 \, \Exp{\chi} \Exp{x_i} \Exp{x_j} + \sO (N \, N_r + N_r^2) \right] \\
\;&=\;
\Exp{\chi} \Exp{x_i} \Exp{x_j} + \sO (N_r^{-1}) \\
\;&=\;
\omega \, \mu_i \, \mu_j + \sO(N_r^{-1}) .
\end{aligned}
\end{equation}
Next, we compute the variance using the property
\begin{equation}
\Var{\hat{\omega} \, \hat{\mu}_i \, \hat{\mu}_j}
\;=\;
\Exp{ \left( \hat{\omega} \, \hat{\mu}_i \, \hat{\mu}_j \right)^2} 
- \Exp{\hat{\omega} \, \hat{\mu}_i \, \hat{\mu}_j}^2 .
\end{equation}
To find $\Exp{ \left( \hat{\omega} \, \hat{\mu}_i \, \hat{\mu}_j \right)^2}$,
\begin{equation}
\begin{aligned}
\Exp{ \left( \hat{\omega} \, \hat{\mu}_i \, \hat{\mu}_j \right)^2}
\;&=\;
\Exp{\left( \frac{1}{N} \sum_{k_1=1}^N \chi^{k_1} \right)^2
\left( \frac{1}{N_r} \sum_{k_2=1}^{N_r} x_i^{k_2} \right)^2
\left( \frac{1}{N_r} \sum_{k_3=1}^{N_r} x_j^{k_3} \right)^2} \\
\;&=\;
\frac{1}{N^2 N_r^4} \sum_{k_1=1}^N \sum_{k_2=1}^N \sum_{k_3=1}^{N_r} \sum_{k_4=1}^{N_r} \sum_{k_5=1}^{N_r} \sum_{k_6=1}^{N_r} \Exp{\chi^{k_1} \chi^{k_2} x_i^{k_3} x_i^{k_4} x_j^{k_5} x_j^{k_6}} .
\end{aligned}
\end{equation}
Again, we can express this summation using \eqref{eq:K_multi-index_set} and \eqref{eq:pq_tensor} as 
\begin{equation}
\Exp{\left( \hat{\omega} \, \hat{\mu}_i \, \hat{\mu}_j \right)^2}
\;=\;
\frac{1}{N^2 N_r^4} \sum_{\vk \in \sK^{2,4}} \mB^{2,4}_{\vk} .
\end{equation}
By Lemma \ref{lem:general_B_sum},
\begin{equation}
\begin{aligned}
\Exp{\left( \hat{\omega} \, \hat{\mu}_i \, \hat{\mu}_j \right)^2}
\;&=\;
\frac{1}{N^2 N_r^4} \sum_{\vk \in \sK^{2,4}} \mB^{2,4}_{\vk} \\
\;&=\;
\frac{1}{N^2 N_r^4} \left[ N^2 N_r^4 \, \Exp{\chi}^2 \Exp{x_i}^2 \Exp{x_j}^2 + \sO (N^2 N_r^3 + N N_r^4) \right] \\
\;&=\;
\Exp{\chi}^2 \Exp{x_i}^2 \Exp{x_j}^2 + \sO (N_r^{-1}) \\
\;&=\;
\omega^2 \mu_i^2 \mu_j^2 + \sO (N_r^{-1}) .
\end{aligned}
\end{equation}
Thus,
\begin{equation} \label{eq:SIR_var_of_prod}
\begin{aligned}
\Var{\hat{\omega} \, \hat{\mu}_i \, \hat{\mu}_j}
\;&=\;
\Exp{\left( \hat{\omega} \, \hat{\mu}_i \, \hat{\mu}_j \right)^2} - \Exp{\hat{\omega} \, \hat{\mu}_i \, \hat{\mu}_j}^2 \\
\;&=\;
\left( \omega^2 \mu_i^2 \mu_j^2 + \sO (N_r^{-1}) \right) - \left( \omega \, \mu_i \, \mu_j + \sO (N_r^{-1}) \right)^2 \\
\;&=\;
\sO (N_r^{-1}) .
\end{aligned}
\end{equation}
Equations \eqref{eq:SIR_exp_of_prod2} and \eqref{eq:SIR_var_of_prod} hold for each $r = 1,\dots,R$. Let $(\cdot)_{ij}$ denote the $ij$th element of the given matrix. Since $\hCSIR$ is a summation over $r=1,\dots,R$,
\begin{equation} \label{eq:SIR_exp_and_var}
\Exp{(\hCSIR)_{ij}}
\;=\;
(\CSIR)_{ij} + \sO \left( \Nrmin^{-1} \right)
\quad \text{and} \quad
\Var{(\hCSIR)_{ij}}
\;=\;
\sO \left( \Nrmin^{-1} \right) 
\end{equation}
where $\Nrmin$ is the same from \eqref{eq:N_r_min}, 
\begin{equation}
\Nrmin \;=\; \min_{1 \leq r \leq R} N_r .
\end{equation}
From \eqref{eq:SIR_exp_and_var}, the mean squared error for each element of $\hCSIR$ is
\begin{equation} \label{eq:SIR_MSE}
\begin{aligned}
\MSE{(\hCSIR)_{ij}}
\;&=\;
\Bias{(\hCSIR)_{ij}}^2 + \Var{(\hCSIR)_{ij}} \\
\;&=\;
\left( \Exp{(\hCSIR)_{ij}} - (\CSIR)_{ij} \right)^2
+ \Var{(\hCSIR)_{ij}} \\
\;&=\;
\left( (\CSIR)_{ij} + \sO \left( \Nrmin^{-1} \right) - (\CSIR)_{ij} \right)^2
+ \sO \left( \Nrmin^{-1} \right) \\
\;&=\;
\sO \left( \Nrmin^{-1} \right) .
\end{aligned}
\end{equation}
Next, we examine how the element-wise mean squared error in \eqref{eq:SIR_MSE} translates to errors in the eigenvalue estimates. By Corollary 8.1.6 in~\cite{Golub96},
\begin{equation}
\left| \lambda_k (\CSIR) - \lambda_k (\hCSIR) \right| \;\leq\; || \mE ||_2
\end{equation}
where $\mE = \CSIR - \hCSIR$. Since $||\cdot||_2 \leq ||\cdot||_F$,
\begin{equation}
\left| \lambda_k (\CSIR) - \lambda_k (\hCSIR) \right| \;\leq\; || \mE ||_F .
\end{equation}
Squaring both sides and taking the expectation,
\begin{equation}
\Exp{ \left( \lambda_k (\CSIR) - \lambda_k (\hCSIR) \right)^2} \;\leq\; \Exp{|| \mE ||_F^2} .
\end{equation}
Consider the right-hand side
\begin{equation} \label{eq:exp_E}
\begin{aligned}
\Exp{|| \mE ||_F^2}
\;&=\;
\Exp{\sum_{i=1}^m \sum_{j=1}^m (\mE)_{ij}^2} \\
\;&=\;
\sum_{i=1}^m \sum_{j=1}^m \Exp{(\mE)_{ij}^2} \\
\;&=\;
\sum_{i=1}^m \sum_{j=1}^m \Exp{ \left( (\CSIR)_{ij} - (\hCSIR)_{ij} \right)^2} \\
\;&=\;
\sum_{i=1}^m \sum_{j=1}^m \MSE{(\hCSIR)_{ij}} \\
\;&=\;
\sO (\Nrmin^{-1}) 
\end{aligned}
\end{equation}
as required.
\end{proof}

In words, the mean-squared error in the eigenvalues of $\hCSIR$ decays at a $N_{r_{\min}}^{-1}$ rate. Since $\omega_r > 0$ for all $r$, $N_{r_{\min}} \rightarrow \infty$ as $N \rightarrow \infty$. Moreover, the convergence rate suggests that one should define the slices in Algorithm \ref{alg:SIR} such that the same number of samples appears in each slice. This maximizes $N_{r_{\min}}$ and reduces the error in the eigenvalues. In practice, the eigenvalues are often used to determine the dimension of the approximated subspace, so understanding the approximation error is important.


The next theorem shows the convergence of the subspaces produced by Algorithm \ref{alg:SIR} as the number of point evaluations of $f$ increases. We measure convergence using the subspace distance~\cite{Golub96},
\begin{equation} \label{eq:sub_dist}
\dist{\mA}{\hat{\mA}} \;=\; 
\left\| \mA \mA^T - \hat{\mA} \hat{\mA}^T \right\|_2,
\end{equation}
where $\mA, \hat{\mA}$ are the first $n$ eigenvectors of $\CSIR$ and $\hCSIR$, respectively. The distance metric \eqref{eq:sub_dist} is the principal angle between the subspaces $\colspan{\mA}$ and $\colspan{\hat{\mA}}$. 
\begin{theorem}
\label{thm:SIR_sub_converge}
Assume the same conditions from Theorem \ref{thm:SIR_eig_converge}. Then, for sufficiently large $N$,
\begin{equation}
\dist{\mA}{\hat{\mA}} \;=\; \frac{1}{\lambda_n (\CSIR) - \lambda_{n+1} (\CSIR)}\;\sO (N^{-1/2}_{r_{\min}})
\end{equation}
with high probability.
\end{theorem}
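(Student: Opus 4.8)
The plan is to reduce the claim to a deterministic eigenvalue--eigenvector perturbation bound applied to the random error matrix $\mE = \CSIR - \hCSIR$, feeding in the mean-squared control on $\mE$ already established in the proof of Theorem~\ref{thm:SIR_eig_converge}, and then passing from that moment bound to a high-probability bound by Markov's inequality.

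First I would record what is already available. Write $\delta = \lambda_n(\CSIR) - \lambda_{n+1}(\CSIR) > 0$ for the spectral gap that makes $\mA$ well defined. Equation~\eqref{eq:exp_E} gives $\Exp{\|\mE\|_F^2} = \sO(\Nrmin^{-1})$, hence $\Exp{\|\mE\|_2^2} \leq \Exp{\|\mE\|_F^2} = \sO(\Nrmin^{-1})$ since $\|\mE\|_2 \leq \|\mE\|_F$. Fix a confidence level $\epsilon \in (0,1)$. Markov's inequality applied to $\|\mE\|_2^2$ yields
\begin{equation}
\Prob{ \|\mE\|_2 \;\geq\; \epsilon^{-1/2}\sqrt{\Exp{\|\mE\|_2^2}} } \;\leq\; \epsilon ,
\end{equation}
so on an event $\mathcal{G}_\epsilon$ of probability at least $1-\epsilon$ we have $\|\mE\|_2 \leq \epsilon^{-1/2}\sqrt{\Exp{\|\mE\|_2^2}} = \sO(\Nrmin^{-1/2})$, with the implied constant depending on $\epsilon$. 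Since $\Nrmin \rightarrow \infty$ with probability one as $N \rightarrow \infty$, this bound also forces $\|\mE\|_2 < \delta/2$ on $\mathcal{G}_\epsilon$ once $N$ is large enough.

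Next I would invoke the symmetric eigenspace perturbation theory of~\cite{Stewart90} (a $\sin\Theta$-type theorem of Davis--Kahan). Both $\CSIR$ and $\hCSIR = \CSIR - \mE$ are symmetric positive semidefinite, and by Weyl's inequality the condition $\|\mE\|_2 < \delta/2$ guarantees $\lambda_n(\hCSIR) - \lambda_{n+1}(\CSIR) \geq \delta - \|\mE\|_2 > 0$, so the two eigenvalue clusters stay separated and $\hat{\mA}$ is the well-defined leading $n$-dimensional invariant subspace of $\hCSIR$. The perturbation theorem then gives the deterministic bound
\begin{equation}
\dist{\mA}{\hat{\mA}} \;\leq\; \frac{\|\mE\|_2}{\delta - \|\mE\|_2} \;\leq\; \frac{2\,\|\mE\|_2}{\delta} ,
\end{equation}
valid on every realization in $\mathcal{G}_\epsilon$ for $N$ large. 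Combining the two displays, on $\mathcal{G}_\epsilon$ and for sufficiently large $N$,
\begin{equation}
\dist{\mA}{\hat{\mA}} \;\leq\; \frac{2}{\delta}\,\|\mE\|_2 \;=\; \frac{1}{\lambda_n(\CSIR) - \lambda_{n+1}(\CSIR)}\,\sO(\Nrmin^{-1/2}) ,
\end{equation}
which is the asserted estimate with high probability.

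The step I expect to be the main obstacle is pinning down the perturbation bound in exactly the stated form, i.e.\ with the gap of the \emph{unperturbed} matrix $\CSIR$ in the denominator: Davis--Kahan naturally produces a gap that mixes eigenvalues of $\CSIR$ and $\hCSIR$, and reducing it to $\lambda_n(\CSIR) - \lambda_{n+1}(\CSIR)$ requires the smallness $\|\mE\|_2 < \delta/2$, which is precisely why the hypothesis ``for sufficiently large $N$'' appears and why the conclusion is only probabilistic. A minor related point is the reading of ``with high probability'': the argument delivers, for each prescribed $\epsilon$, an $\sO(\cdot)$ constant depending on $\epsilon$, which is the intended interpretation and is consistent with the mean-squared spirit of Theorem~\ref{thm:SIR_eig_converge}.
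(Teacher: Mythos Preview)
Your proposal is correct and follows essentially the same route as the paper: use the moment bound $\Exp{\|\mE\|_F^2}=\sO(\Nrmin^{-1})$ from \eqref{eq:exp_E}, pass to a high-probability bound on $\|\mE\|$ via a Chebyshev/Markov inequality, use that to verify the smallness hypothesis of a symmetric eigenspace perturbation theorem, and conclude. The only cosmetic differences are that the paper invokes Corollary~8.1.11 of~\cite{Golub96} (threshold $\|\mE\|_2\le\delta/5$, bound $4\|\mE_{21}\|_2/\delta$) rather than Davis--Kahan with Weyl, and phrases the concentration step as the Chebyshev-type bound from~\cite{Stewart90} on $\|\mE\|_F$ rather than Markov on $\|\mE\|_2^2$; the logical structure and the resulting constants absorbed into the $\sO(\cdot)$ are the same.
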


\begin{proof}
The matrices $\mA , \hat{\mA} \in \mathbb{R}^{m \times n}$ contain the first $n$ eigenvectors of $\CSIR$ and $\hCSIR$, respectively. Let $\mB , \hat{\mB} \in \mathbb{R}^{m \times (m-n)}$ contain the last $m-n$ eigenvectors of each matrix. By Corollary 8.1.11 in~\cite{Golub96}, if 
\begin{equation}
||\mE||_2 \;\leq\; \frac{\lambda_n (\CSIR) - \lambda_{n+1} (\CSIR)}{5} ,
\end{equation}
then
\begin{equation} \label{eq:Cor8.1.11}
\dist{\mA}{\hat{\mA}} 
\;\leq\;
\frac{4}{\lambda_n (\CSIR) - \lambda_{n+1} (\CSIR)} || \mE_{21} ||_2
\end{equation}
where $\mE = \CSIR - \hCSIR$ and $\mE_{21} = \mB^T \mE \mA$. We wish to show that the condition for Corollary 8.1.11 holds with high probability for sufficiently large $N$. Theorem 2.6 from~\cite{Stewart90} states that for any $\tau > 0$
\begin{equation}
\label{eq:Cheb_ineq}
\mathbb{P} \left( \, ||\mE||_F \leq \tau \, \sqrt{\Exp{||\mE||_F^2}} \, \right)
\;\geq\;
1 - \frac{1}{\tau^2}
\end{equation}
which is derived from the Chebyshev inequality. Choose $\tau_*$ to be large such that $1/\tau_*^2$ is arbitrarily close to zero. Equation \eqref{eq:exp_E} states that $\Exp{||\mE||_F^2} = \sO (\Nrmin^{-1})$. Additionally, $\Nrmin \rightarrow \infty$ as $N \rightarrow \infty$ since $\omega_r > 0$ for each $r = 1,\dots,R$. Therefore, there exists $N_*$ such that when $N > N_*$
\begin{equation}
\tau_* \, \sqrt{\Exp{||\mE||_F^2}}
\;\leq\;
\frac{\lambda_n (\CSIR) - \lambda_{n+1} (\CSIR)}{5}
\end{equation}
Combining this with \eqref{eq:Cheb_ineq} implies that
\begin{equation}
\mathbb{P} \left( \, ||\mE||_F \leq \frac{\lambda_n (\CSIR) - \lambda_{n+1} (\CSIR)}{5} \, \right)
\;\geq\;
1 - \frac{1}{\tau_*^2} 
\end{equation}
when $N > N_*$. Since $||\cdot||_2 \leq ||\cdot||_F$,
\begin{equation}
\mathbb{P} \left( \, ||\mE||_2 \leq \frac{\lambda_n (\CSIR) - \lambda_{n+1} (\CSIR)}{5} \, \right) \geq 1 - \frac{1}{\tau_*^2} .
\end{equation}
By Corollary 8.1.11 in~\cite{Golub96},
\begin{equation}
\begin{aligned}
\dist{\mA}{\hat{\mA}} &\leq \frac{4}{\lambda_n (\CSIR) - \lambda_{n+1} (\CSIR)} || \mE_{21} ||_2 \\
&\leq \frac{4}{\lambda_n (\CSIR) - \lambda_{n+1} (\CSIR)} ||\mE||_F \\
&\leq \frac{4}{\lambda_n (\CSIR) - \lambda_{n+1} (\CSIR)} \tau_* \sqrt{\Exp{||\mE||_F^2}} \\
&= \frac{1}{\lambda_n (\CSIR) - \lambda_{n+1} (\CSIR)} \sO (\Nrmin^{-1/2})
\end{aligned}
\end{equation}
with probability $1 - 1/\tau_*^2$ when $N > N_*$.
\end{proof}
Thus, the subspace error decays at a rate $N_{r_{\min}}^{-1/2}$ with high probability for sufficiently large $N$. Perhaps the more interesting result from Theorem \ref{thm:SIR_sub_converge} is the inverse relationship between the subspace error and the magnitude of the gap between the $n$th and $(n+1)$th eigenvalues. That is, a large gap between eigenvalues suggests a better estimate of the subspace for a fixed number of samples. We choose not to subsume this factor within the $\sO$ notation to emphasize the importance of the spectral gap $\lambda_n-\lambda_{n+1}$ on SIR's ability to estimate the column span of $\mA$. One insight gained from explicitly revealing this part of the constant in the big-$\mathcal{O}$ notation is that the errors in the subspaces do not necessarily decay as a $n$ increases. In fact, if the eigenvalues plateau as a function of $n$ as is common in real data sets, then errors in the higher dimensional subspace estimates increase. 


\subsection{SAVE for ridge recovery}
\label{subsec:SAVE_for_RR}

We next consider Algorithm \ref{alg:SAVE} in the context of ridge recovery. The $\CAVE$ matrix from \eqref{eq:CAVE_reg} can be expressed as an integral with respect to the push-forward measure $\gamma$,
\begin{equation}
\label{eq:CAVE}
\CAVE
\;=\;
\int \left( \mI - \mSigma (y) \right)^2 \, d \gamma (y) ,
\end{equation}
where the conditional covariance is
\begin{equation}
\label{eq:cond_cov}
\mSigma (y)
\;=\;
\int \left( \vx - \vmu(y) \right) \, \left( \vx - \vmu(y) \right)^T \, d \sigma_{\vx|y} (\vx) .
\end{equation}
We use the same slicing function $h$ from \eqref{eq:hy_r} to enable approximation of \eqref{eq:CAVE} and \eqref{eq:cond_cov}. Considering ridge recovery of the sliced output leads to the matrix
\begin{equation}
\CSAVE \;=\; \sum_{r=1}^R \omega_r \, \left( \mI - \mSigma_r \right)^2 ,
\end{equation}
where $\omega_r$ is the probability mass function from \eqref{eq:slice_density} and 
\begin{equation}
\mSigma_r
\;=\;
\int \left( \vx - \vmu_r \right) \, \left( \vx - \vmu_r \right)^T \, d \sigma_{\vx|r} (\vx) .
\end{equation}
By containment of the central subspace, we know that 
\begin{equation}
\colspan{\CSAVE} \;\subseteq\; \sS_{h,\rho} \;\subseteq\; \sS_{f,\rho} .
\end{equation}
Additionally, we can interpret $\CSAVE$ as a Riemann sum approximation of $\CAVE$ using a similar argument as in the previous section for $CSIR$ and $CIR$. Algorithm \ref{alg:SAVE} computes a sample approximation of $\CSAVE$ (denoted by $\hCSAVE$) using given predictor/response pairs $\{ [ \, \vx_i^T \, , \, y_i \, ] \}$ for $i = 1,\dots,N$. For deterministic functions, we again use Monte Carlo sampling to construct an analogous set of input/output pairs. These pairs are then used in the SAVE algorithm to address the ridge recovery problem. The matrix $\hCSAVE$ is random, as are its eigenvalues and eigenspaces. Therefore, the convergence analysis of SAVE for ridge recovery is probabilistic.

The following theorem shows the rate of mean-squared convergence of the eigenvalues of $\hCSAVE$.
\begin{theorem}
\label{thm:SAVE_eig_converge}
Assume that Algorithm \ref{alg:SAVE} has been applied to the data set $\{ [ \, \vx_i^T \, , \, f(\vx_i) \, ] \}$, with $i=1,\dots,N$, where the $\vx_i$ are drawn independently according to $\rho$. Then, for $k = 1,\dots,m$,
\begin{equation}
\Exp{\left( \lambda_k (\CSAVE) - \lambda_k (\hCSAVE) \right)^2} \;=\; \sO (N^{-1}_{r_{\min}})
\end{equation}
where $\lambda_k (\cdot)$ denotes the $k$th eigenvalue of the given matrix.
\end{theorem}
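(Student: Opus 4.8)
The plan is to follow the proof of Theorem~\ref{thm:SIR_eig_converge} closely. Write the population matrix as $\CSAVE = \sum_{r=1}^R \omega_r (\mI - \mSigma_r)^2$ and the matrix produced by Algorithm~\ref{alg:SAVE} as $\hCSAVE = \sum_{r=1}^R \hat{\omega}_r (\mI - \hat{\mSigma}_r)^2$, where $\hat{\omega}_r = N_r/N = \frac{1}{N}\sum_{i=1}^N \chi(y_i \in J_r)$ and $\hat{\mSigma}_r$ is the within-slice sample covariance from Algorithm~\ref{alg:SAVE}. As in the proof of Theorem~\ref{thm:SIR_eig_converge} I would fix a slice index $r$, assume without loss of generality that $\sI_r = \{1,\dots,N_r\}$, and reduce the statement to the two element-wise estimates $\Exp{(\hCSAVE)_{ij}} = (\CSAVE)_{ij} + \sO(\Nrmin^{-1})$ and $\Var{(\hCSAVE)_{ij}} = \sO(\Nrmin^{-1})$ for all $1 \le i,j \le m$; these give $\MSE{(\hCSAVE)_{ij}} = \Bias{(\hCSAVE)_{ij}}^2 + \Var{(\hCSAVE)_{ij}} = \sO(\Nrmin^{-1})$ exactly as in \eqref{eq:SIR_MSE}.

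To establish the two element-wise estimates I would use Lemma~\ref{lem:general_B_sum}. First expand $(\mI - \hat{\mSigma}_r)^2 = \mI - 2\hat{\mSigma}_r + \hat{\mSigma}_r^2$ and write $\hat{\mSigma}_r = \frac{1}{N_r-1}\sum_{i\in\sI_r}\vx_i\vx_i^T - \frac{N_r}{N_r-1}\,\hat{\vmu}_r\hat{\vmu}_r^T$, so that each entry of $\hat{\omega}_r(\mI - \hat{\mSigma}_r)^2$, and likewise each entry of its square, becomes a finite linear combination of normalized multi-sums $\frac{1}{N^p N_r^q}\sum_{\vk \in \sK^{p,q}(r)} \mB^{p,q}_{\vk}(r)$ in products of coordinates of the sample points and of the indicator $\chi(\cdot \in J_r)$. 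Because $\hat{\mSigma}_r$ is quadratic in the slice samples and $\hat{\mSigma}_r^2$ quartic, the relevant indices are $p \le 1,\ q \le 4$ for $\Exp{(\hCSAVE)_{ij}}$ and $p \le 2,\ q \le 8$ for $\Exp{(\hCSAVE)_{ij}^2}$, the extreme case coming from the $\big(\hat{\omega}_r(\hat{\mSigma}_r^2)_{ij}\big)^2$ term. Lemma~\ref{lem:general_B_sum} applied to each such term gives leading contribution equal to the product of the individual expectations, with remainder $\sO(N^p N_r^{q-1} + N^{p-1} N_r^q)$; dividing by $N^p N_r^q$ this is $\sO(N_r^{-1} + N^{-1}) = \sO(N_r^{-1})$ since $N_r \le N$. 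For the bias one then checks, using $\Exp{\chi(\cdot \in J_r)} = \omega_r$ and $\frac{N_r}{N_r-1} = 1 + \sO(N_r^{-1})$, that the leading terms of the $\mI$, $\hat{\mSigma}_r$, and $\hat{\mSigma}_r^2$ pieces recombine into $\omega_r(\mI - \mSigma_r)^2_{ij}$; for the variance one uses $\Var{\cdot} = \Exp{(\cdot)^2} - \Exp{\cdot}^2$ and observes that the $\sO(1)$ leading terms cancel, leaving $\sO(N_r^{-1})$. Finally $\Var{(\hCSAVE)_{ij}} \le R \sum_{r=1}^R \Var{\hat{\omega}_r(\mI-\hat{\mSigma}_r)^2_{ij}}$ by Cauchy--Schwarz and $\Exp{(\hCSAVE)_{ij}}$ sums termwise, so with $N_r \ge \Nrmin$ and $R$ fixed we recover the two element-wise estimates.

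The passage from the element-wise mean-squared error to the eigenvalues is then identical to the end of the proof of Theorem~\ref{thm:SIR_eig_converge}: setting $\mE = \CSAVE - \hCSAVE$, Corollary 8.1.6 in~\cite{Golub96} gives $|\lambda_k(\CSAVE) - \lambda_k(\hCSAVE)| \le \|\mE\|_2 \le \|\mE\|_F$, so that
\begin{equation*}
\Exp{\big(\lambda_k(\CSAVE) - \lambda_k(\hCSAVE)\big)^2} \;\le\; \Exp{\|\mE\|_F^2} \;=\; \sum_{i=1}^m\sum_{j=1}^m \MSE{(\hCSAVE)_{ij}} \;=\; \sO(\Nrmin^{-1}).
\end{equation*}

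I expect the main obstacle to be the bookkeeping in the second paragraph. Unlike SIR, where each entry of $\hCSIR$ is a single multi-sum of total degree three in the data, SAVE's $(\mI - \hat{\mSigma}_r)^2$ generates many terms of considerably higher polynomial degree (up to eight after squaring for the second-moment computation), and two points must be verified with care: that all of the $\sO(1)$ leading terms across the $\mI$, $\hat{\mSigma}_r$, and $\hat{\mSigma}_r^2$ pieces reassemble exactly into $(\CSAVE)_{ij}$, and that the mismatch between the $1/(N_r-1)$ normalization in $\hat{\mSigma}_r$ and the $1/N_r$ normalizations in Lemma~\ref{lem:general_B_sum} contributes only $\sO(N_r^{-1})$ corrections. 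An alternative that avoids part of this expansion is to invoke the standard facts that $\Exp{\hat{\mSigma}_r \mid N_r} = \mSigma_r$ exactly and $\Var{(\hat{\mSigma}_r)_{ab} \mid N_r} = \sO(N_r^{-1})$ and then propagate these through the product $\hat{\mSigma}_r^2$ and through multiplication by $\hat{\omega}_r$; routing everything through Lemma~\ref{lem:general_B_sum} instead keeps the argument self-contained and exactly parallel to Theorem~\ref{thm:SIR_eig_converge}.
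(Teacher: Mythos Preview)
Your proposal is correct and follows essentially the same route as the paper: fix a slice, expand each entry of $\hat\omega_r(\mI-\hat\mSigma_r)^2$ into normalized multi-sums, apply Lemma~\ref{lem:general_B_sum} term by term to obtain bias and variance of order $\sO(N_r^{-1})$, sum over the $R$ slices, and finish with Corollary~8.1.6 of~\cite{Golub96} exactly as in Theorem~\ref{thm:SIR_eig_converge}. The only cosmetic difference is that the paper writes the $(i,j)$ entry directly as $\hat\omega(\delta_{ij}-(\hat\mSigma)_{ij})^2$ rather than expanding $(\mI-\hat\mSigma_r)^2 = \mI - 2\hat\mSigma_r + \hat\mSigma_r^2$ as a matrix product, and it glosses over the $1/(N_r-1)$ normalization and the slice-summation of variances that you (rightly) flag as points requiring care.
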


\begin{proof}
Algorithm \ref{alg:SAVE} approximates the population matrix
\begin{equation}
\label{eq:CSAVE2}
\CSAVE \;=\; \sum_{r=1}^R \omega_r \, \left( \mI - \mSigma_r \right)^2 ,
\end{equation}
by the sample matrix 
\begin{equation}
\label{eq:hCSAVE}
\hCSAVE \;=\; \sum_{r=1}^R \hat{\omega}_r \, \left( \mI - \hat{\mSigma}_r \right)^2 ,
\end{equation}
where $\hat{\omega}_r$ and $\hat{\mSigma}_r$ are sample estimates of $\omega_r$ and $\mSigma_r$, respectively. Define $\sI_r$ to be the set of indices such that $y_i \in J_r$, and let $N_r$ be the cardinality of $\sI_r$. In the SAVE algorithm, these estimates are
\begin{equation}
\hat{\omega}_r
\;=\;
\frac{N_r}{N}
\qquad \text{and} \qquad
\hat{\mSigma}_r
\;=\;
\frac{1}{N_r - 1} \sum_{i \in \sI_r} \left( \vx_i - \hat{\vmu}_r \right) \left( \vx_i - \hat{\vmu}_r \right)^T
\end{equation}
where $\hat{\vmu}_r$ is the sample estimate of the average from \eqref{eq:sample_slice_average}. We may rewrite $\hat{\omega}_r$ and $\hat{\mSigma}_r$ as
\begin{equation}
\hat{\omega}_r
\;=\;
\frac{1}{N} \sum_{i=1}^N \chi \left( y_i \in J_r \right)
\quad \text{and} \quad
\hat{\mSigma}_r
\;=\;
\frac{1}{N_r - 1} \sum_{i \in \sI_r} \vx_i \, \vx_i^T - \frac{N_r}{N_r - 1} \hat{\vmu}_r \, \hat{\vmu}_r^T 
\end{equation}
where $\chi \left( y_i \in J_r \right)$ is an indicator function which is 1 when $y_i \in J_r$ and 0 otherwise. 

Fix $r$ such that we are considering a single term of the summations in \eqref{eq:CSAVE2} and \eqref{eq:hCSAVE}. Let 
\begin{equation}
\begin{aligned}
\omega \;=\; \omega_r , \quad
\hat{\omega} \;=\; \hat{\omega}_r , \quad
\chi_i \;=\; \chi \left( y_i \in J_r \right) , \hspace{3.5em} \\
\vmu \;=\; \vmu_r , \quad
\hat{\vmu} \;=\; \hat{\vmu}_r , \quad
\mSigma \;=\; \mSigma_r , \quad \text{and} \quad
\hat{\mSigma} \;=\; \hat{\mSigma}_r .
\end{aligned}
\end{equation}
Assume without loss of generality that $\sI_r = \{ 1 , \dots , N_r \}$. This simplifies notation as we compute the mean squared error.

To compute the mean squared error, we focus on the computation of a single element in $\hCSAVE$. To do this, we move the sample index to the superscript and let the subscript denote the vector element similar to proof of Theorem \ref{thm:SIR_eig_converge}. Additionally, we let $(\cdot)_{ij}$ denote the $ij$th element of the given matrix. Thus, for $1 \leq i , j \leq m$,
\begin{equation} \label{eq:SAVE_exp_of_prod1}
\begin{aligned}
\Exp{\hat{\omega} \left( \delta_{ij} - (\hat{\mSigma})_{ij} \right)^2}
\;&=\;
\mathbb{E} \left[ \left( \frac{1}{N} \sum_{k_1=1}^N \chi^{k_1} \right) 
\left( \delta_{ij} -  \left( \frac{1}{N_r - 1} \sum_{k_2=1}^{N_r} x_i^{k_2} x_j^{k_2} \right. \right. \right. \dots \\
&\hspace{11em} 
\left. \left. \left. - \left( \frac{1}{N_r} \sum_{k_3=1}^{N_r} x_i^{k_3} \right) \left( \frac{1}{N_r} \sum_{k_4=1}^{N_r} x_j^{k_4} \right) \right) \right)^2 \right] \\
\;&=\;
\frac{\delta_{ij}}{N} \sum_{k_1=1}^N \Exp{\chi^{k_1}} - \frac{2 \delta_{ij}}{N (N_r - 1)} \sum_{k_1=1}^N \sum_{k_2=1}^{N_r} \Exp{\chi^{k_1} x_i^{k_2} x_j^{k_2}} \dots \\
&\hspace{1.5em}
+ \frac{2 \delta_{ij}}{N N_r^2} \sum_{k_1=1}^N \sum_{k_2=1}^{N_r} \sum_{k_3=1}^{N_r} \Exp{\chi^{k_1} x_i^{k_2} x_j^{k_3}} \dots \\
&\hspace{1.5em}
+ \frac{1}{N (N_r - 1)^2} \sum_{k_1=1}^N \sum_{k_2=1}^{N_r} \sum_{k_3=1}^{N_r} \Exp{\chi^{k_1} x_i^{k_2} x_j^{k_2} x_i^{k_3} x_j^{k_3}} \dots \\
&\hspace{1.5em}
- \frac{2}{N N_r^2 (N_r - 1)} \sum_{k_1=1}^N \sum_{k_2=1}^{N_r} \sum_{k_3=1}^{N_r} \sum_{k_4=1}^{N_r} \Exp{\chi^{k_1} x_i^{k_2} x_j^{k_2} x_i^{k_3} x_j^{k_4}} \dots \\
&\hspace{1.5em}
+ \frac{1}{N N_r^4} \sum_{k_1=1}^N \sum_{k_2=1}^{N_r} \sum_{k_3=1}^{N_r} \sum_{k_4=1}^{N_r} \sum_{k_5=1}^{N_r} \Exp{\chi^{k_1} x_i^{k_2} x_j^{k_3} x_i^{k_4} x_j^{k_5}} .
\end{aligned}
\end{equation}
Equations \eqref{eq:K_multi-index_set} and \eqref{eq:pq_tensor} allow us to rewrite \eqref{eq:SAVE_exp_of_prod1} as sums over various tensors. Since $r$ is fixed, we drop the argument from the notation of \eqref{eq:K_multi-index_set} and \eqref{eq:pq_tensor}. Thus,
\begin{equation}
\begin{aligned}
\Exp{\hat{\omega} \left( \delta_{ij} - (\hat{\mSigma})_{ij} \right)^2}
\;&=\;
\frac{\delta_{ij}}{N} \sum_{k_1=1}^N \Exp{\chi^{k_1}}
- \frac{2 \delta_{ij}}{N (N_r - 1)} \sum_{\vk \in \sK^{1,1}} \mB^{1,1}_{\vk} \dots \\
&\hspace{0.5em}
+ \frac{2 \delta_{ij}}{N N_r^2} \sum_{\vk \in \sK^{1,2}} \mB^{1,2}_{\vk}
+ \frac{1}{N (N_r - 1)^2} \sum_{\vk \in \sK^{1,2}} \mB^{1,2}_{\vk} \dots \\
&\hspace{0.5em}
- \frac{2}{N N_r^2 (N_r - 1)} \sum_{\vk \in \sK^{1,3}} \mB^{1,3}_{\vk}
+ \frac{1}{N N_r^4} \sum_{\vk \in \sK^{1,4}} \mB^{1,4}_{\vk} .
\end{aligned}
\end{equation}
By Lemma \ref{lem:general_B_sum},
\begin{equation} \label{eq:SAVE_exp_one_r}
\begin{aligned}
\Exp{\hat{\omega} \left( \delta_{ij} - (\hat{\mSigma})_{ij} \right)^2}
\;&=\;
\frac{\delta_{ij}}{N} \sum_{k_1=1}^N \Exp{\chi^{k_1}}
- \frac{2 \delta_{ij}}{N (N_r - 1)} \sum_{\vk \in \sK^{1,1}} \mB^{1,1}_{\vk} \dots \\
&\hspace{0.5em}
+ \frac{2 \delta_{ij}}{N N_r^2} \sum_{\vk \in \sK^{1,2}} \mB^{1,2}_{\vk}
+ \frac{1}{N (N_r - 1)^2} \sum_{\vk \in \sK^{1,2}} \mB^{1,2}_{\vk} \dots \\
&\hspace{0.5em}
- \frac{2}{N N_r^2 (N_r - 1)} \sum_{\vk \in \sK^{1,3}} \mB^{1,3}_{\vk}
+ \frac{1}{N N_r^4} \sum_{\vk \in \sK^{1,4}} \mB^{1,4}_{\vk} \\
\;&=\;
\frac{\delta_{ij}}{N} \left[ N \, \Exp{\chi} \right]
- \frac{2 \delta_{ij}}{N (N_r - 1)} \left[ N N_r \, \Exp{\chi} \Exp{x_i x_j} + \sO (N + N_r) \right] \dots \\
&\hspace{0.5em}
+ \frac{2 \delta_{ij}}{N N_r^2} \left[ N N_r^2 \, \Exp{\chi} \Exp{x_i} \Exp{x_j} + \sO (N N_r + N_r^2) \right] \dots \\
&\hspace{0.5em}
+ \frac{1}{N (N_r - 1)^2} \left[ N N_r^2 \, \Exp{\chi} \Exp{x_i x_j}^2 + \sO (N N_r + N_r^2) \right] \dots \\
&\hspace{0.5em}
- \frac{2}{N N_r^2 (N_r - 1)} \left[ N N_r^3 \, \Exp{\chi} \Exp{x_i x_j} \Exp{x_i} \Exp{x_j} + \sO (N N_r^2 + N_r^3) \right] \dots \\
&\hspace{0.5em}
+ \frac{1}{N N_r^4} \left[ N N_r^4 \, \Exp{\chi} \Exp{x_i}^2 \Exp{x_j}^2 + \sO (N N_r^3 + N_r^4) \right] \\ 
\;&=\;
\delta_{ij} \Exp{\chi}
- 2 \delta_{ij} \Exp{\chi} \Exp{x_i x_j}
+ 2 \delta_{ij} \Exp{\chi} \Exp{x_i} \Exp{x_j} \dots \\
&\hspace{0.5em}
+ \Exp{\chi} \Exp{x_i x_j}^2
- 2 \, \Exp{\chi} \Exp{x_i x_j} \Exp{x_i} \Exp{x_j} \dots \\
&\hspace{0.5em}
+ \Exp{\chi} \Exp{x_i}^2 \Exp{x_j}^2
+ \sO (N_r^{-1}) \\
\;&=\;
\Exp{\chi} \left( \delta_{ij} - \left( \Exp{x_i x_j} - \Exp{x_i} \Exp{x_j} \right) \right)^2 + \sO (N_r^{-1}) \\
\;&=\;
\omega \left( \delta_{ij} - (\hat{\mSigma})_{ij} \right)^2 + \sO (N_r^{-1}) .
\end{aligned}
\end{equation}
Next, we compute the variance using the property
\begin{equation}
\Var{\hat{\omega} \left( \delta_{ij} - (\hat{\mSigma})_{ij} \right)^2} = \Exp{\left( \hat{\omega} \left( \delta_{ij} - (\hat{\mSigma})_{ij} \right)^2 \right)^2} - \Exp{\hat{\omega} \left( \delta_{ij} - (\hat{\mSigma})_{ij} \right)^2}^2 .
\end{equation}
To find $\Exp{\left( \hat{\omega} \left( \delta_{ij} - (\hat{\mSigma})_{ij} \right)^2 \right)^2}$,
\begin{equation} \label{eq:SAVE_exp_of_prod2}
\begin{aligned}
\Exp{\left( \hat{\omega} \left( \delta_{ij} - (\hat{\mSigma})_{ij} \right)^2 \right)^2}
\;&=\;
\mathbb{E} \left[ \left( \frac{1}{N} \sum_{k_1=1}^N \chi^{k_1} \right)^2
\left( \delta_{ij} -  \left( \frac{1}{N_r - 1} \sum_{k_2=1}^{N_r} x_i^{k_2} x_j^{k_2} \right. \right. \right. \dots \\
&\hspace{9em}
\left. \left. \left. - \left( \frac{1}{N_r} \sum_{k_3=1}^{N_r} x_i^{k_3} \right) \left( \frac{1}{N_r} \sum_{k_4=1}^{N_r} x_j^{k_4} \right) \right) \right)^4 \right] .
\end{aligned}
\end{equation}
By expanding, we can rewrite \eqref{eq:SAVE_exp_of_prod2} in a form which can be simplified using the tensor summation notation from \eqref{eq:K_multi-index_set} and \eqref{eq:pq_tensor}. We then apply Lemma \ref{lem:general_B_sum} to simplify these summations and obtain
\begin{equation}
\Exp{\left( \hat{\omega} \left( \delta_{ij} - (\hat{\mSigma})_{ij} \right)^2 \right)^2}
\;=\;
\left( \omega \left( \delta_{ij} - (\mSigma)_{ij} \right)^2 \right)^2 + \sO (N_r^{-1}) .
\end{equation}
Thus,
\begin{equation} \label{eq:SAVE_var_one_r}
\begin{aligned}
\Var{\hat{\omega} \left( \delta_{ij} - (\hat{\mSigma})_{ij} \right)^2}
\;&=\;
\Exp{\left( \hat{\omega} \left( \delta_{ij} - (\hat{\mSigma})_{ij} \right)^2 \right)^2}
- \Exp{\hat{\omega} \left( \delta_{ij} - (\hat{\mSigma})_{ij} \right)^2}^2 \\
\;&=\;
\left( \left( \omega \left( \delta_{ij} - (\mSigma)_{ij} \right)^2 \right)^2 + \sO (N_r^{-1}) \right) \\
&\hspace{7em} 
- \left( \omega \left( \delta_{ij} - (\mSigma)_{ij} \right)^2  + \sO (N_r^{-1}) \right)^2 \\
\;&=\;
\sO (N_r^{-1}) .
\end{aligned}
\end{equation}
Similar to the proof of Theorem \ref{thm:SIR_eig_converge}, we extend \eqref{eq:SAVE_exp_one_r} and \eqref{eq:SAVE_var_one_r} from one slice to the total summation to obtain the expectation and variance for a single element of $\hCSAVE$,
\begin{equation} \label{eq:SAVE_exp_and_var}
\Exp{(\hCSAVE)_{ij}} \;=\; (\CSAVE)_{ij} + \sO \left( \Nrmin^{-1} \right), 
\qquad
\Var{(\hCSAVE)_{ij}} \;=\; \sO \left( \Nrmin^{-1} \right)
\end{equation}
where $\Nrmin$ from \eqref{eq:N_r_min} denotes the minimum number of samples in any one slice. From \eqref{eq:SAVE_exp_and_var}, the mean squared error for a single element of the SAVE matrix is
\begin{equation}
\MSE{(\hCSAVE)_{ij}} \;=\; \sO (\Nrmin^{-1}) .
\end{equation}
Similar to the proof of Theorem \ref{thm:SIR_eig_converge}, we use this mean squared error to obtain
\begin{equation}
\Exp{||\mE||_F^2} \;=\; \sO (\Nrmin^{-1})
\end{equation}
where $\mE = \CSAVE - \hCSAVE$. Combining this result with Corollary 8.1.6 in~\cite{Golub96} yields the desired result,
\begin{equation}
\Exp{ \left( \lambda_k (\CSAVE) - \lambda_k (\hCSAVE) \right)^2} \;=\; \sO \left( \Nrmin^{-1} \right) ,
\end{equation}
as required.
\end{proof}

Next, we examine the convergence of the subspaces Algorithm \ref{alg:SAVE} produces, where the subspace distance is from \eqref{eq:sub_dist}.
\begin{theorem}
\label{thm:SAVE_sub_converge}
Assume the same conditions from Theorem \ref{thm:SAVE_eig_converge}. Then, for sufficiently large $N$,
\begin{equation}
\dist{\mA}{\hat{\mA}} \;=\; 
\frac{1}{\lambda_n (\CSAVE) - \lambda_{n+1} (\CSAVE)}\;\sO (N^{-1/2}_{r_{\min}})
\end{equation}
with high probability.
\end{theorem}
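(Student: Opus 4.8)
The plan is to follow the proof of Theorem \ref{thm:SIR_sub_converge} essentially verbatim, substituting $\CSAVE$, $\hCSAVE$ for $\CSIR$, $\hCSIR$ and invoking Theorem \ref{thm:SAVE_eig_converge} (together with the intermediate bound on $\Exp{\|\mE\|_F^2}$ established in its proof) in place of Theorem \ref{thm:SIR_eig_converge}. Concretely, let $\mA,\hat{\mA}\in\mathbb{R}^{m\times n}$ hold the leading $n$ eigenvectors of $\CSAVE$ and $\hCSAVE$, let $\mB,\hat{\mB}\in\mathbb{R}^{m\times(m-n)}$ hold the trailing $m-n$ eigenvectors, and set $\mE=\CSAVE-\hCSAVE$ and $\mE_{21}=\mB^T\mE\mA$. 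Corollary 8.1.11 in \cite{Golub96} then guarantees that whenever $\|\mE\|_2\leq(\lambda_n(\CSAVE)-\lambda_{n+1}(\CSAVE))/5$ one has $\dist{\mA}{\hat{\mA}}\leq 4(\lambda_n(\CSAVE)-\lambda_{n+1}(\CSAVE))^{-1}\|\mE_{21}\|_2$; the whole argument reduces to showing this smallness condition holds with high probability for large $N$.

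Next I would control $\|\mE\|_F$ probabilistically exactly as before. From the proof of Theorem \ref{thm:SAVE_eig_converge} we already know $\Exp{\|\mE\|_F^2}=\sO(\Nrmin^{-1})$. Theorem 2.6 in \cite{Stewart90} (a Chebyshev-type bound) gives $\Prob{\|\mE\|_F\leq\tau\sqrt{\Exp{\|\mE\|_F^2}}}\geq 1-\tau^{-2}$ for any $\tau>0$. Choosing $\tau_*$ large so that $\tau_*^{-2}$ is as close to zero as desired, and using that $\omega_r>0$ for every $r$ forces $\Nrmin\to\infty$ as $N\to\infty$ (hence $\Exp{\|\mE\|_F^2}\to 0$), there is an $N_*$ such that $\tau_*\sqrt{\Exp{\|\mE\|_F^2}}\leq(\lambda_n(\CSAVE)-\lambda_{n+1}(\CSAVE))/5$ whenever $N>N_*$. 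Combining this with the Chebyshev bound and $\|\cdot\|_2\leq\|\cdot\|_F$ shows the hypothesis of Corollary 8.1.11 holds with probability at least $1-\tau_*^{-2}$ once $N>N_*$.

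Finally, on that high-probability event I would chain the inequalities $\dist{\mA}{\hat{\mA}}\leq 4(\lambda_n(\CSAVE)-\lambda_{n+1}(\CSAVE))^{-1}\|\mE_{21}\|_2\leq 4(\lambda_n(\CSAVE)-\lambda_{n+1}(\CSAVE))^{-1}\|\mE\|_F\leq 4(\lambda_n(\CSAVE)-\lambda_{n+1}(\CSAVE))^{-1}\tau_*\sqrt{\Exp{\|\mE\|_F^2}}$, absorb the constant $4\tau_*$ into the $\sO$, and use $\Exp{\|\mE\|_F^2}=\sO(\Nrmin^{-1})$ to conclude $\dist{\mA}{\hat{\mA}}=(\lambda_n(\CSAVE)-\lambda_{n+1}(\CSAVE))^{-1}\sO(\Nrmin^{-1/2})$ with probability $1-\tau_*^{-2}$ for $N>N_*$, which is the claim.

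Because Theorem \ref{thm:SAVE_eig_converge} is already in hand, there is no genuinely new analytic obstacle here; the only subtlety—and where I would be careful—is the bookkeeping around the spectral gap. One must treat $\lambda_n(\CSAVE)-\lambda_{n+1}(\CSAVE)>0$ as an implicit hypothesis (the conclusion divides by it, so the statement is vacuous otherwise), and one must confirm that $\Nrmin\to\infty$ with probability one, which follows from the standing assumption $\omega_r>0$ for all $r$. Everything heavier—the order-six tensor sums feeding $\Exp{\|\mE\|_F^2}=\sO(\Nrmin^{-1})$—was discharged in the previous theorem and is simply quoted here.
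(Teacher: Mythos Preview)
Your proposal is correct and matches the paper's approach exactly: the paper's proof simply cites the bound $\Exp{\|\mE\|_F^2}=\sO(\Nrmin^{-1})$ from the proof of Theorem~\ref{thm:SAVE_eig_converge} and then states that the remainder is identical to the proof of Theorem~\ref{thm:SIR_sub_converge}. You have spelled out that identical argument in full, including the same invocation of Corollary~8.1.11 in \cite{Golub96}, the Chebyshev-type bound from \cite{Stewart90}, and the choice of $\tau_*$ and $N_*$.
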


\begin{proof}
In the proof of Theorem \ref{thm:SAVE_eig_converge}, we showed that 
\begin{equation}
\Exp{||\mE||_F^2} \;=\; \sO (\Nrmin^{-1})
\end{equation}
where $\mE = \CSAVE - \hCSAVE$. Given this result, the proof for Theorem \ref{thm:SAVE_sub_converge} is identical to the proof for Theorem \ref{thm:SIR_sub_converge}.
\end{proof}

The subspace error for Algorithm \ref{alg:SAVE} decays like $N_{r_{\min}}^{-1/2}$ with high probability for sufficiently large $N$. Similar to the estimated SIR subspace from Algorithm \ref{alg:SIR}, the error depends inversely on the eigenvalue gap. If the gap between the $n$th and $(n+1)$th eigenvalues is large, then the error in the estimated $n$-dimensional subspace is small for a fixed number of samples. This fact gives insight into the errors in estimated subspaces of different dimensions. In particular, if the true eigenvalues plateau as $n$ increases---as is common in practice---then the estimated subspace errors increase with increasing $n$.

\section{Numerical results}
\label{sec:numericalresults}

We examine the performance of SIR and SAVE for ridge recovery in three problems that exhibit known ridge structure. The first two problems are multivariate quadratic functions, and the third problem is a physically-motivated test problem from magnetohydrodynamics. We provide several graphics in the numerical study of these problems including plots of estimated eigenvalues, eigenvalue errors, and subspace errors. One graphic is especially useful for visualizing the structure of the function relative to its central subspace: the sufficient summary plot~\cite{Cook98}. Sufficient summary plots show $y$ versus $\mA^T \vx$, where $\mA$ contains a basis for the central subspace. Algorithms \ref{alg:SIR} and \ref{alg:SAVE} produce eigenvectors that span an approximation of the SIR and SAVE subspaces, respectively. We use these eigenvectors to construct the one- or two-dimensional inputs for sufficient summary plots.

We emphasize that the goal of these numerical experiments is to verify the expected numerical behavior (i.e., asymptotic convergence) of SIR and SAVE for ridge recovery. We do not compare (e.g., cost versus accuracy) with other dimension reduction methods that might be used for ridge recovery. The point of this paper is not comparison with other methods; it is to understand theoretically how the inverse regression methods SIR and SAVE can be applied and interpreted in the context of deterministic approximation. A comprehensive numerical comparison with other methods should be supported by theoretical comparison, and these comparisons are beyond the scope of the present manuscript. Moreover, although the need for dimension reduction is broadly motivated by expensive computer models, we do not study the SIR and SAVE performance on expensive models. Since our goal is to verify asymptotic rates, we use cheap functions that can be evaluated many times. 

The python code used to generate the figures found throughout this section are available at \url{https://bitbucket.org/aglaws/inverse-regression-for-ridge-recovery}. The scripts require the dev branch of the Python Active-subspaces Utility Library~\cite{joss2016}, which can be found at \url{https://github.com/paulcon/active_subspaces/tree/dev}. We note that the convergence analysis from Sections \ref{subsec:SIR_for_RR} and \ref{subsec:SAVE_for_RR} assumes a fixed slicing of the observed $y$ range. However, Algorithms \ref{alg:SIR} and \ref{alg:SAVE} are implemented in the Utility Library using an adaptive slicing approach that attempts to maximize $\Nrmin$ for a given set of data. This is done as a heuristic technique for reducing eigenvalue and subspace errors.

\subsection{One-dimensional quadratic function}
\label{ssec:numex1}
Let $\vx \in \mathbb{R}^{10}$, and let $\rho$ be a standard multivariate Gaussian measure. Define the function
\begin{equation} \label{eq:1d_quad}
y \;=\; f(\vx) \;=\; \left( \vb^T \vx \right)^2,
\end{equation}
where $\vb \in \mathbb{R}^{10}$ is a constant vector. The span of $\vb$ is the central subspace. First, we attempt to uncover the central subspace using SIR (Algorithm \ref{alg:SIR}), which is known to fail for functions symmetric about $\vx = \mzero$~\cite{Cook91}; Figure \ref{fig:quadratic1_SIR} confirms this failure. In fact, $\CIR$ for this problem is zero since the conditional expectation of $\vx$ for any value of $y$ is zero. Figure \ref{fig:quadratic1_SIR_evals} shows that all estimated eigenvalues of the SIR matrix are nearly zero as expected. Figure \ref{fig:quadratic1_SIR_1d_ssp} is a one-dimensional sufficient summary plot of $y$ against $\hat{\vw}_1^T \vx$, where $\hat{\vw}_1$ denotes the normalized eigenvector associated with the largest eigenvalue of $\hCSIR$ from Algorithm \ref{alg:SIR}. If (i) the central subspace is one-dimensional (as in this case) and (ii) the chosen SDR algorithm correctly identifies the one basis vector, then the sufficient summary plot will show a univariate relationship between the linear combination of input evaluations and the associated outputs. Due to the symmetry in the quadratic function, SIR fails to recover the basis vector. However, it should be noted that if \eqref{eq:1d_quad} had the form $y = f(\vx) = ( \vb^T \vx + \vc)^2$ for a constant $\vc \neq \mzero$, then SIR would not suffer these issues. 

\begin{figure}[!ht]
\centering
\subfloat[Eigenvalues of $\hCSIR$ from \eqref{eq:hDSIR}]{
\label{fig:quadratic1_SIR_evals}
\includegraphics[width=0.4\textwidth]{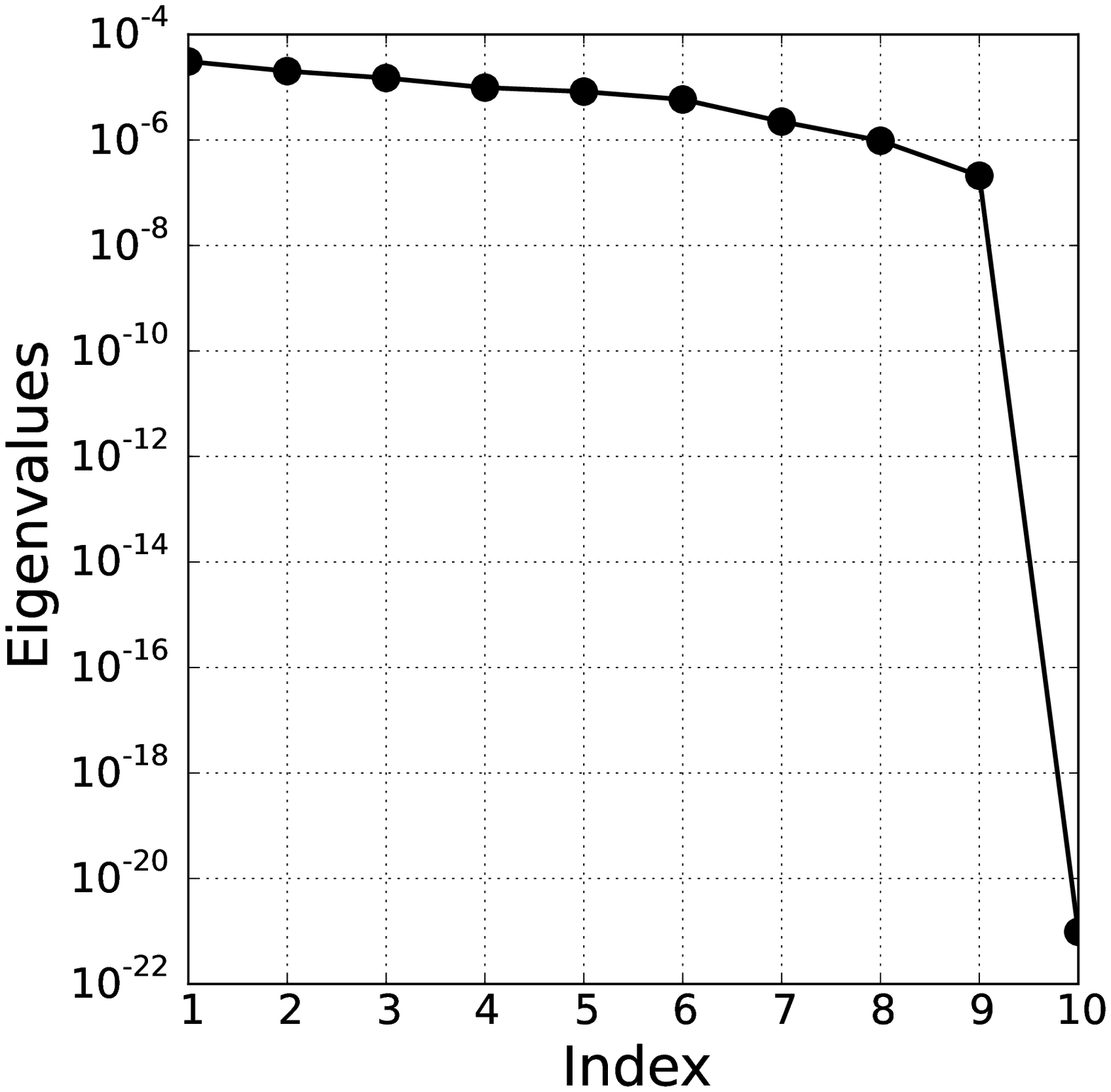}
}
\hfil
\subfloat[Sufficient summary plot for SIR]{
\label{fig:quadratic1_SIR_1d_ssp}
\includegraphics[width=0.4\textwidth]{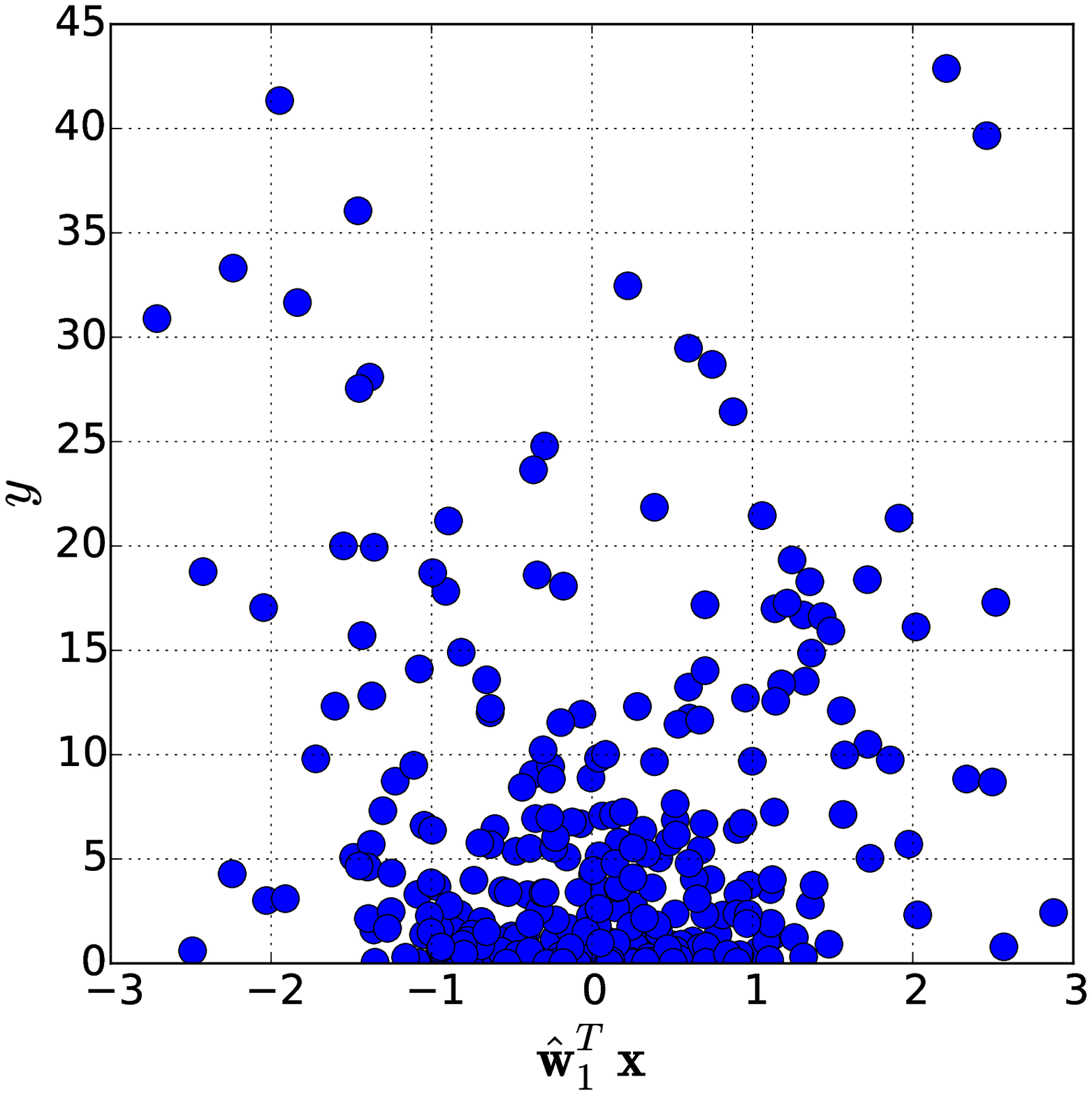}
}
\caption{As expected, SIR fails to recover the ridge direction $\vb$ in the function \eqref{eq:1d_quad}.}
\label{fig:quadratic1_SIR}
\end{figure}

Figure \ref{fig:quadratic1_SAVE} shows results from applying SAVE (Algorithm \ref{alg:SAVE}) to the quadratic function \eqref{eq:1d_quad}. Figure \ref{fig:quadratic1_SAVE_evals} shows the eigenvalues of $\hCSAVE$ from Algorithm \ref{alg:SAVE}. Note the large gap between the first and second eigenvalues, which suggests that the SAVE subspace is one-dimensional. Figure \ref{fig:quadratic1_SAVE_1d_ssp} shows the sufficient summary plot using the first eigenvector $\hvw_1$ from Algorithm \ref{alg:SAVE}, which reveals the true univariate quadratic relationship between $\hvw_1^T\vx$ and $y$.

\begin{figure}[!ht]
\centering
\subfloat[Eigenvalues of $\hCSAVE$ from \eqref{eq:hDSAVE}]{
\label{fig:quadratic1_SAVE_evals}
\includegraphics[width=0.4\textwidth]{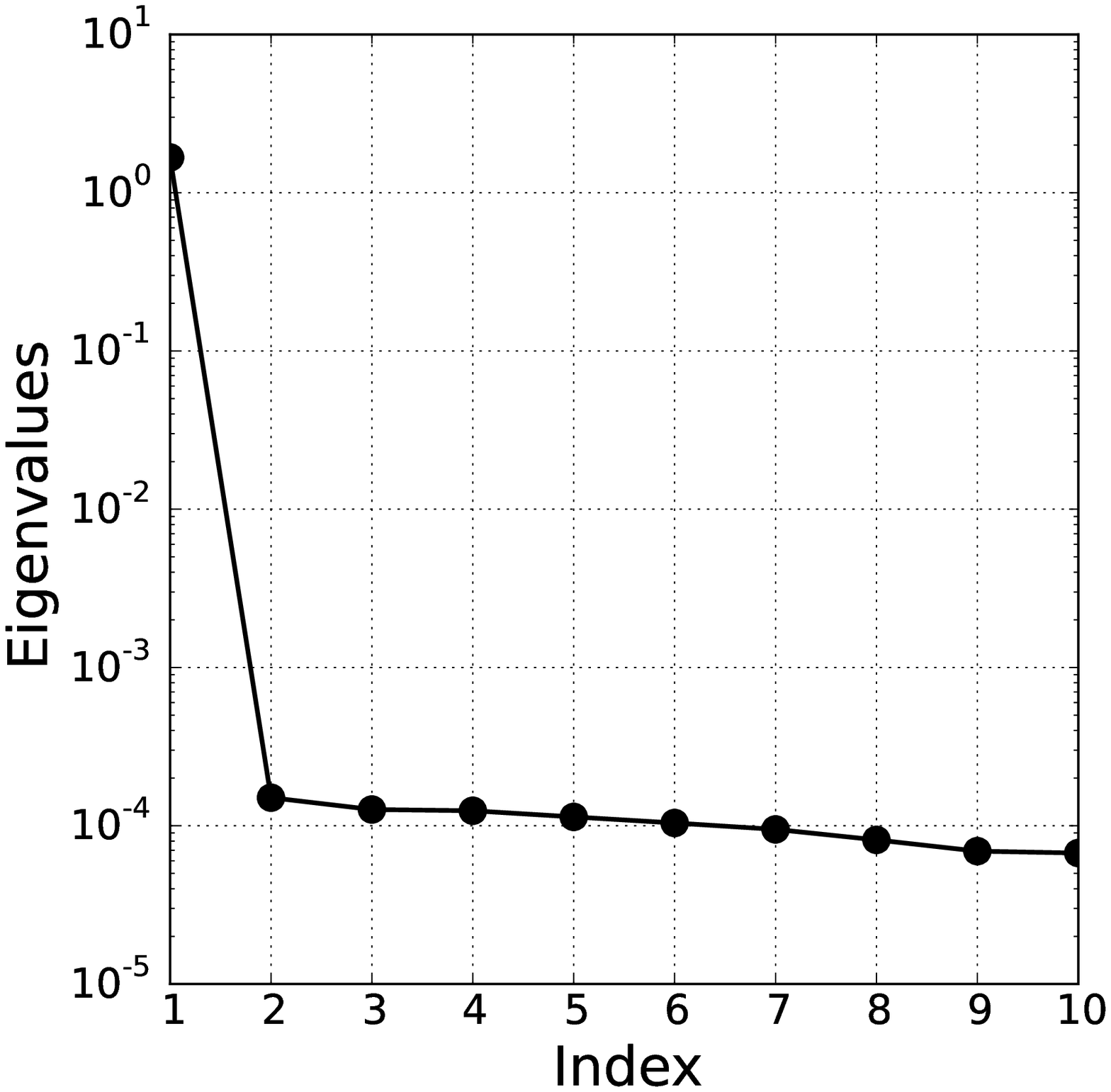}
}
\hfil
\subfloat[Sufficient summary plot for SAVE]{
\label{fig:quadratic1_SAVE_1d_ssp}
\includegraphics[width=0.4\textwidth]{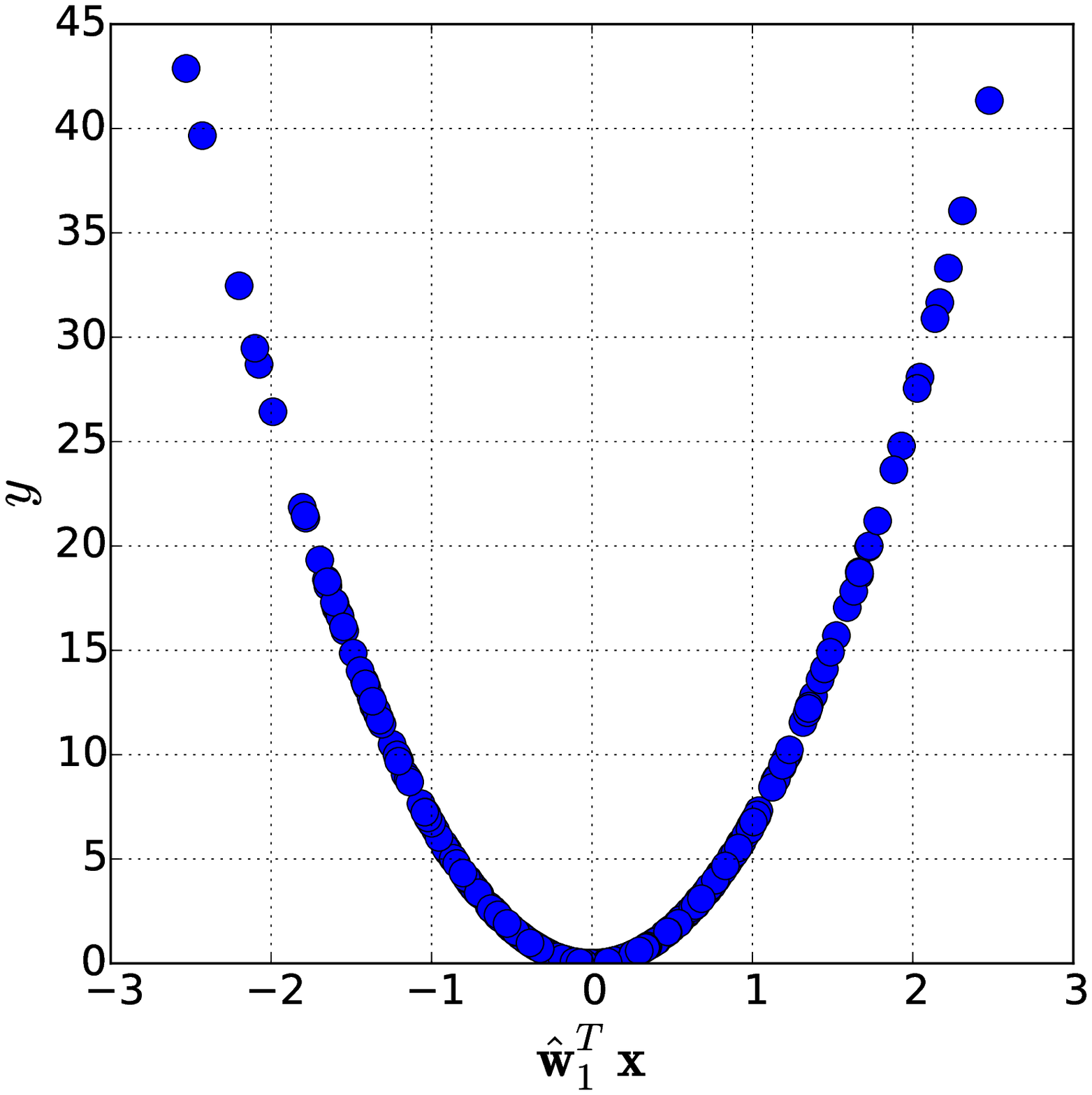}
}
\caption{SAVE recovers the ridge direction $\vb$ in the function \eqref{eq:1d_quad}.}
\label{fig:quadratic1_SAVE}
\end{figure}

\subsection{Three-dimensional quadratic function}

Next, we numerically study the convergence properties of SIR and SAVE using a more complex quadratic function. Let $\vx \in \mathbb{R}^{10}$, and let $\rho$ be the standard multivariate Gaussian measure. Define the function
\begin{equation} \label{eq:3d_quad}
y \;=\; f(\vx) \;=\; \vx^T \mB \mB^T \vx + \vb^T \vx,
\end{equation}
where $\mB \in \mathbb{R}^{10 \times 2}$ and $\vb \in \mathbb{R}^{10}$ with $\vb\not\in\colspan{\mB}$. We expect better results with SIR compared to the example in \eqref{eq:1d_quad} since \eqref{eq:3d_quad} is not symmetric about $\vx = \mzero$. Figure \ref{fig:quadratic2_SIR_evals} shows the eigenvalues of $\hCSIR$ decay; note the gap between the third and fourth eigenvalues. Figure \ref{fig:quadratic2_SIR_eval_errs} shows the maximum squared eigenvalue error normalized by the largest eigenvalue,
\begin{equation}
\max_{1 \leq i \leq m} \frac{\left( \lambda_i (\hCSIR) - \lambda_i (\CSIR) \right)^2}{\lambda_1 (\CSIR)^2} ,
\end{equation}
for increasing numbers of samples in 10 independent trials. We estimate the true eigenvalues using SIR with $10^7$ samples. The mean squared error decays at a rate slightly faster than the $\sO( N^{-1} )$ from Theorem \ref{thm:SIR_eig_converge}. The improvement is likely attributed to the adaptive slicing procedure discussed at the beginning of this section. Figure \ref{fig:quadratic2_SIR_sub_errs} shows the error in the estimated three-dimensional SIR subspace (see \eqref{eq:sub_dist}) for increasing numbers of samples in 10 independent trials. We use $10^7$ samples to create a surrogate \emph{truth} subspace for the convergence study. The subspace errors decrease asymptotically at a rate of approximately $\sO( N^{-1/2} )$, which agrees with Theorem \ref{thm:SIR_sub_converge}.

\begin{figure}[!ht]
\centering
\subfloat[Eigenvalues of $\hCSIR$ from \eqref{eq:hDSIR}]{
\label{fig:quadratic2_SIR_evals}
\includegraphics[width=0.4\textwidth]{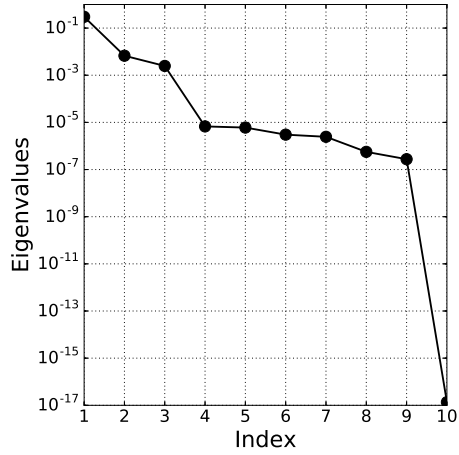}
} \\
\subfloat[Maximum squared eigenvalue error, SIR]{
\label{fig:quadratic2_SIR_eval_errs}
\includegraphics[width=0.4\textwidth]{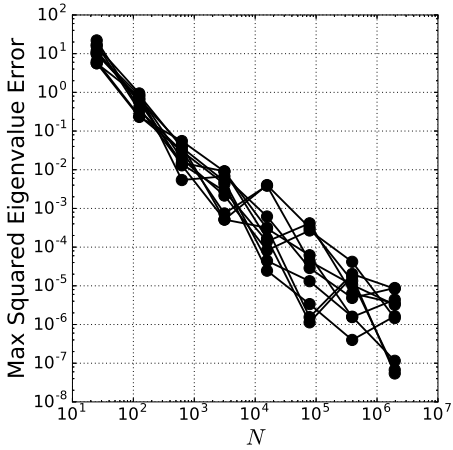}
}
\hfil
\subfloat[SIR subspace errors for $n = 3$]{
\label{fig:quadratic2_SIR_sub_errs}
\includegraphics[width=0.4\textwidth]{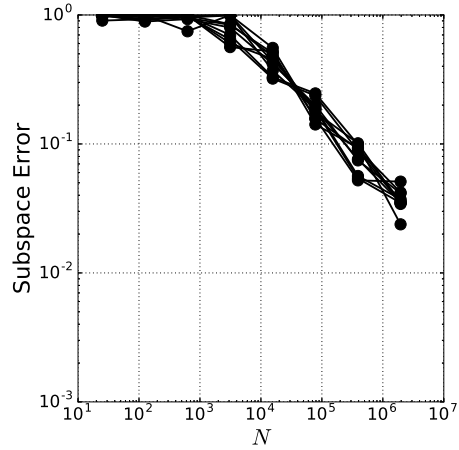}
}
\caption{Eigenvalues, eigenvalue errors, and subspace errors for SIR applied to \eqref{eq:3d_quad}. The error decreases with increasing samples consistent with the convergence theory in section \ref{subsec:SIR_for_RR}.}
\label{fig:quadratic2_SIR}
\end{figure}

Figure \ref{fig:quadratic2_SAVE} shows the results of a similar convergence study using SAVE (Algorithm \ref{alg:SAVE}). The eigenvalues of $\hCSAVE$ from \eqref{eq:hDSAVE} are shown in Figure \ref{fig:quadratic2_SAVE_evals}. Note the large gap between the third and fourth eigenvalues, which is consistent with the three-dimensional central subspace in $f(\vx)$ from \eqref{eq:3d_quad}. Figures \ref{fig:quadratic2_SAVE_eval_errs} and \ref{fig:quadratic2_SAVE_sub_errs} show the maximum squared eigenvalue error and the subspace error for $n = 3$, respectively. The eigenvalue error again decays at a faster rate than expected in Theorem \ref{thm:SAVE_eig_converge}, possibly due to the adaptive slicing implemented in the code. The subspace error decays consistently according to Theorem \ref{thm:SAVE_sub_converge}.

\begin{figure}[!ht]
\centering
\subfloat[Eigenvalues of $\hCSAVE$ from \eqref{eq:hDSAVE}]{
\label{fig:quadratic2_SAVE_evals}
\includegraphics[width=0.4\textwidth]{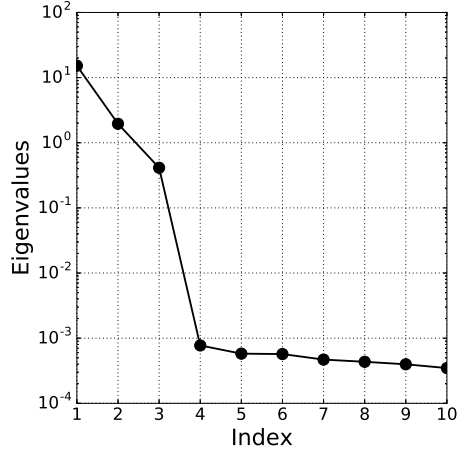}
} \\
\subfloat[Maximum squared eigenvalue error, SAVE]{
\label{fig:quadratic2_SAVE_eval_errs}
\includegraphics[width=0.4\textwidth]{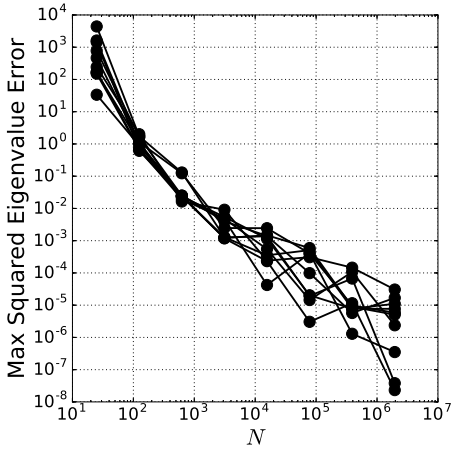}
}
\hfil
\subfloat[SAVE subspace errors for $n = 3$]{
\label{fig:quadratic2_SAVE_sub_errs}
\includegraphics[width=0.4\textwidth]{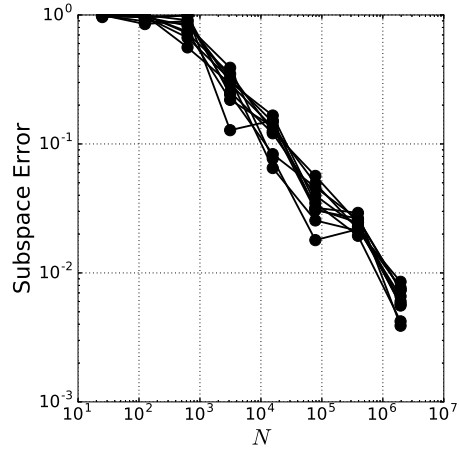}
}
\caption{Eigenvalues, eigenvalue errors, and subspace errors for SAVE applied to \eqref{eq:3d_quad}. The error decreases with increasing samples consistent with the convergence theory in section \ref{subsec:SAVE_for_RR}.}
\label{fig:quadratic2_SAVE}
\end{figure}

\subsection{Hartmann problem}

We use the following model as a test case for parameter space dimension reduction methods in recent work; the problem set up here closely follows our previous development~\cite{Glaws17b}. The Hartmann problem is a standard problem in magnetohydrodynamics (MHD) that models the flow of an electrically-charged plasma in the presence of a uniform magnetic field~\cite{Cowling57}. The flow occurs along an infinite channel between two parallel plates separated by distance $2 \ell$. The applied magnetic field is perpendicular to the flow direction and acts as a resistive force on the flow velocity. At the same time, the movement of the fluid induces a magnetic field along the direction of the flow. Figure \ref{fig:hartmann_prob} contains a diagram of this problem.

\begin{figure}[ht]
\begin{center}
\includegraphics[width=0.5\textwidth]{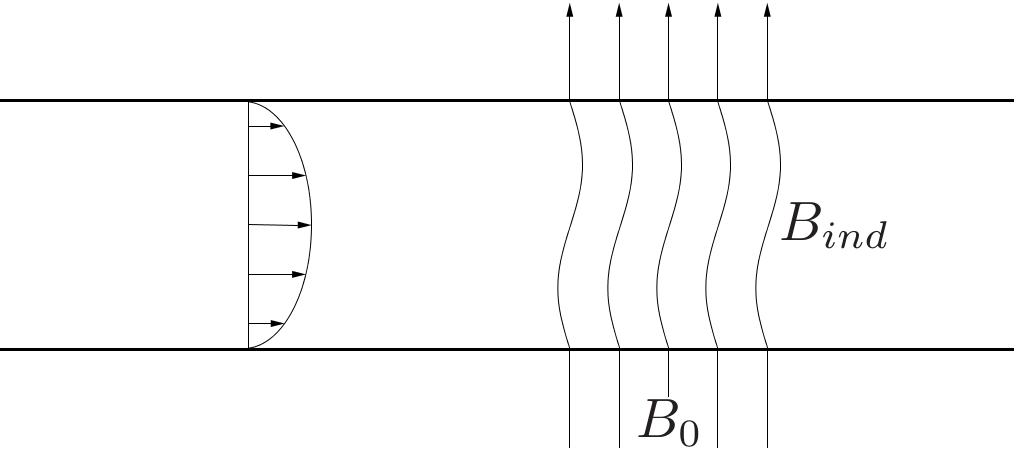}
\end{center}
\caption{The Hartmann problem studies the flow of an ionized fluid between two parallel plates. A magnetic field applied perpendicular to the flow direction acts as a resistive force to the fluid flow. Simultaneously, the fluid induces a magnetic field in the direction of the flow.}
\label{fig:hartmann_prob}
\end{figure}

The inputs to the Hartmann model are fluid viscosity $\mu$, fluid density $\rho$, applied pressure gradient $\partial p_0 / \partial x$ (where the derivative is with respect to the flow field's spatial coordinate), resistivity $\eta$, and applied magnetic field $B_0$. We collect these inputs into a vector,
\begin{equation} \label{eq:hart_inputs}
\vx \;=\; \bmat{ \mu & \rho & \frac{\partial p_0}{\partial x} & \eta & B_0 }^T.
\end{equation}
We consider one output of interest: the total induced magnetic field,
\begin{equation}
\label{eq:Bind}
\Bind (\vx) = \frac{\partial p_0}{\partial x} \frac{\ell \mu_0}{2 B_0} \left( 1 - 2 \frac{\sqrt{\eta \mu}}{B_0 \ell} \, \text{tanh} \left( \frac{B_0 \ell}{2 \sqrt{\eta \mu}} \right) \right) .
\end{equation}
This function is not a ridge function of $\vx$. However, it has been shown that many physical laws can be expressed as ridge functions by considering a log transform of the inputs~\cite{Constantine16}. For this reason, we apply SIR and SAVE ridge recovery to $\Bind$ as a function of the logarithms of the inputs from \eqref{eq:hart_inputs}. The log-transformed inputs are drawn from a multivariate Gaussian with
\begin{equation}
\vmu = \bmat{ -2.25 \\ 1 \\ 0.3 \\ 0.3 \\ -0.75 }, \qquad 
\mSigma = \bmat{ 0.15 & & & & \\ & 0.25 & & & \\ & & 0.25 & & \\ & & & 0.25 & \\ & & & & 0.25} .
\end{equation}
Figure \ref{fig:hartmann_Bind_SIR} shows the results of applying SIR (Algorithm \ref{alg:SIR}) to the Hartmann model for the induced magnetic field $\Bind$. The eigenvalues of $\hCSIR$ from \eqref{eq:hDSIR} with bootstrap ranges are shown in Figure \ref{fig:hartmann_Bind_SIR_evals}. Large gaps appear after the first and second eigenvalues which suggest possible two-dimensional ridge structure. In fact, the induced magnetic field admits a two-dimensional central subspace relative to the log-inputs~\cite{Glaws17b}. Figures \ref{fig:hartmann_Bind_SIR_1d_ssp} and \ref{fig:hartmann_Bind_SIR_2d_ssp} contain one- and two-dimensional sufficient summary plots of $\Bind$ against the $\hvw_1^T \vx$ and $\hvw_2^T \vx$, where $\hvw_1$ and $\hvw_2$ are the first two eigenvectors of $\hCSIR$. We see a strong one-dimensional relationship. However, the two-dimensional sufficient summary plot shows slight curvature with changes in $\hvw_2^T\vx$. These results suggest that ridge-like structure may be discovered using the SIR algorithm in some cases. Figure \ref{fig:hartmann_Bind_SIR_suberr} shows the subspace errors as a function of the subspace dimension. Recall from Theorem \ref{thm:SIR_sub_converge} that the subspace error depends inversely on the eigenvalue gap. The largest eigenvalue gap occurs between the first and second eigenvalues, which is consistent with the smallest subspace error for $n=1$.

\begin{figure}[!ht]
\centering
\subfloat[Eigenvalues of $\hCSIR$ with bootstrap ranges]{
\label{fig:hartmann_Bind_SIR_evals}
\includegraphics[width=0.4\textwidth]{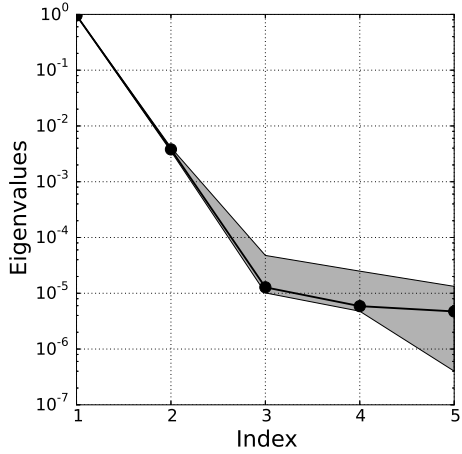}
}
\hfil
\subfloat[Subspace errors from $\hCSIR$]{
\label{fig:hartmann_Bind_SIR_suberr}
\includegraphics[width=0.4\textwidth]{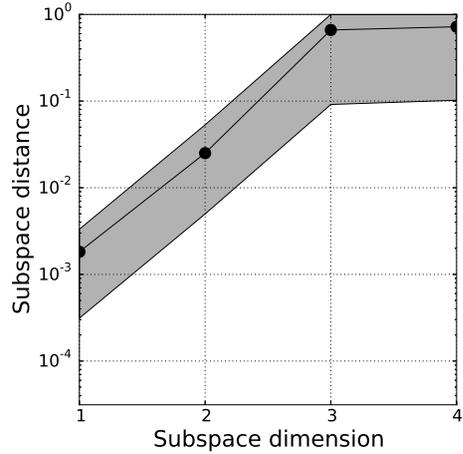}
} \\
\subfloat[One-dimensional SIR summary plot for $\Bind$ from \eqref{eq:Bind}]{
\label{fig:hartmann_Bind_SIR_1d_ssp}
\includegraphics[width=0.4\textwidth]{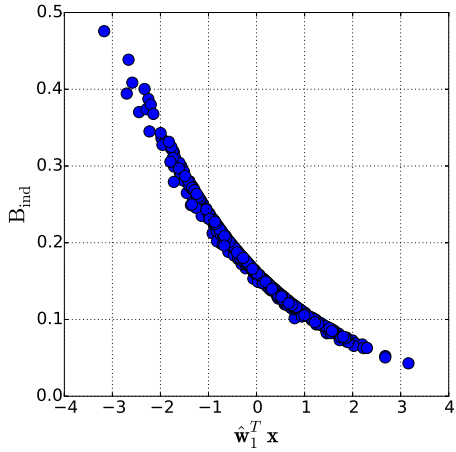}
}
\hfil
\subfloat[Two-dimensional SIR summary plot for $\Bind$ from \eqref{eq:Bind}]{
\label{fig:hartmann_Bind_SIR_2d_ssp}
\includegraphics[width=0.4\textwidth]{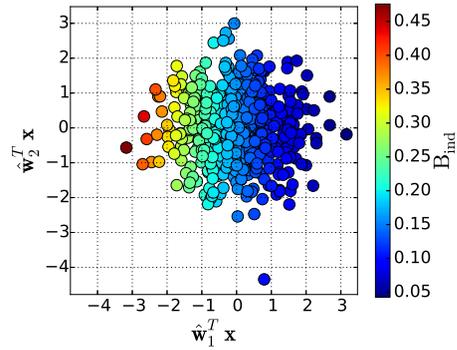}
}
\caption{Eigenvalues with bootstrap ranges, estimated subspace errors, and sufficient summary plots for SIR (Algorithm \ref{alg:SIR}) applied to $\Bind$ from \eqref{eq:Bind}.}
\label{fig:hartmann_Bind_SIR}
\end{figure}

We perform the same numerical studies for SAVE applied to $\Bind$. Figure \ref{fig:hartmann_Bind_SAVE_evals} shows the eigenvalues of the $\hCSAVE$ from \eqref{eq:hDSAVE} for the induced magnetic field $\Bind$ from \eqref{eq:Bind}. Note the large gaps after the first and second eigenvalues. These gaps are consistent with the subspace errors in Figure \ref{fig:hartmann_Bind_SAVE_suberr}, where the one- and two-dimensional subspace estimates have the smallest errors. Figures \ref{fig:hartmann_Bind_SAVE_1d_ssp} and \ref{fig:hartmann_Bind_SAVE_2d_ssp} contain sufficient summary plots for $\hvw_1^T\vx$ and $\hvw_2^T\vx$, where $\hvw_1$ and $\hvw_2$ are the first two eigenvectors from $\hCSAVE$ in \eqref{eq:hDSAVE}.

\begin{figure}[!ht]
\centering
\subfloat[Eigenvalues of $\hCSAVE$ with bootstrap ranges]{
\label{fig:hartmann_Bind_SAVE_evals}
\includegraphics[width=0.4\textwidth]{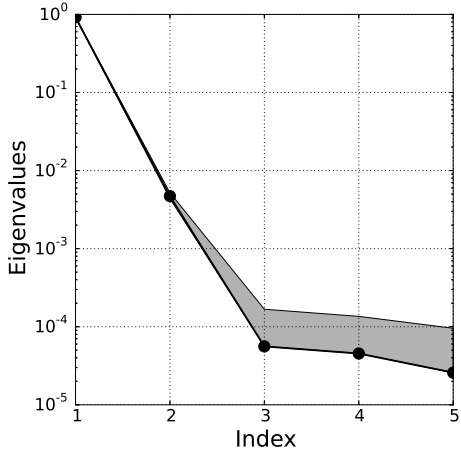}
}
\hfil
\subfloat[Subspace errors from $\hCSAVE$]{
\label{fig:hartmann_Bind_SAVE_suberr}
\includegraphics[width=0.4\textwidth]{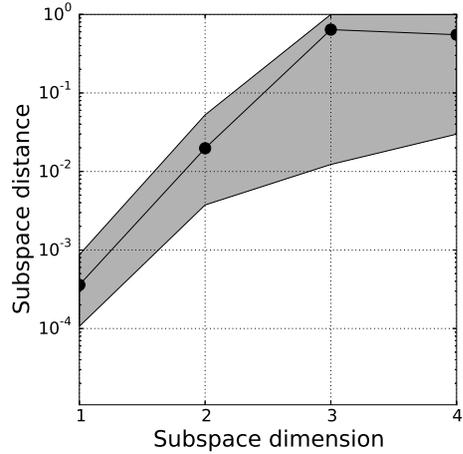}
} \\
\subfloat[One-dimensional SAVE summary plot for $\Bind$ from \eqref{eq:Bind}]{
\label{fig:hartmann_Bind_SAVE_1d_ssp}
\includegraphics[width=0.4\textwidth]{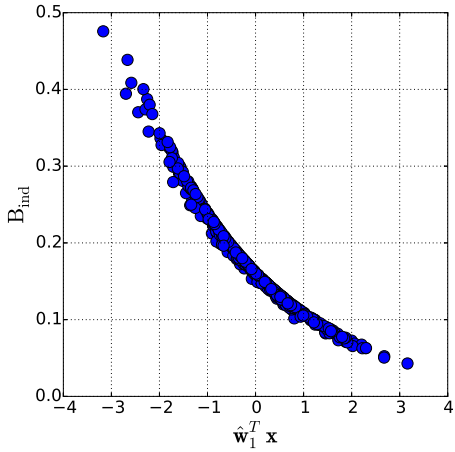}
}
\hfil
\subfloat[Two-dimensional SAVE summary plot for $\Bind$ from \eqref{eq:Bind}]{
\label{fig:hartmann_Bind_SAVE_2d_ssp}
\includegraphics[width=0.4\textwidth]{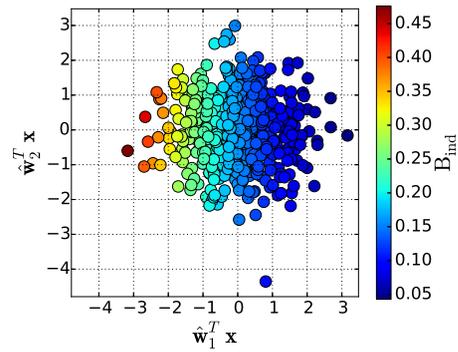}
}
\caption{Eigenvalues with bootstrap ranges, estimated subspace errors, and sufficient summary plots for SAVE (Algorithm \ref{alg:SAVE}) applied to $\Bind$ from \eqref{eq:Bind}.}
\label{fig:hartmann_Bind_SAVE}
\end{figure}

\section{Summary and conclusion}
\label{sec:conclusion}

We study the numerical behavior of two inverse regression methods---sliced inverse regression and sliced average variance estimation---applied to the ridge recovery problem. We reinterpret these methods as Monte Carlo approximations of matrices of integrals. This allows us to study the numerical convergence of the algorithms. The mean-squared error of the estimated SIR and SAVE eigenvalues converges at a rate inversely proportional to $\Nrmin$---the smallest number of samples in any slice. The distance between the estimated subspace and the true subspace decays like $\Nrmin^{-1/2}$ for both SIR and SAVE, although the error in the estimated $n$-dimensional subspace depends inversely on the gap between the $n$th and $(n+1)$th eigenvalues. Finally, we illustrate the results of the numerical analysis by applying the methods to several test problems, including two quadratic functions and a simplified MHD model.

\section*{Acknowledgments}
\noindent We thank Luis Tenorio and Don Estep for helpful discussions and insights regarding the work presented in this paper.

\bibliographystyle{siamplain}
\bibliography{deterministicSDR}

\begin{thebibliography}{10}

\bibitem{Abbott01}
{\sc S.~Abbott}, {\em Understanding Analysis}, Springer, New York, 2nd~ed.,
  2001, \url{http://www.springer.com/us/book/9781493927111}.

\bibitem{Chang1997}
{\sc J.~T. Chang and D.~Pollard}, {\em Conditioning as disintegration},
  Statistica Neerlandica, 51 (1997), pp.~287--317,
  \url{https://doi.org/10.1111/1467-9574.00056}.

\bibitem{joss2016}
{\sc P.~Constantine, R.~Howard, A.~Glaws, Z.~Grey, P.~Diaz, and L.~Fletcher},
  {\em Python active-subspaces utility library}, The Journal of Open Source
  Software,  (2016), \url{https://doi.org/10.21105/joss.00079}.

\bibitem{Constantine16}
{\sc P.~G. Constantine, Z.~del Rosario, and G.~Iaccarino}, {\em Data-driven
  dimensional analysis: algorithms for unique and relevant dimensionless
  groups}, arXiv:1708.04303,  (2017), \url{https://arxiv.org/abs/1708.04303}.

\bibitem{Cook96}
{\sc R.~D. Cook}, {\em Graphics for regressions with a binary response},
  Journal of the American Statistical Association, 91 (1996), pp.~983--992,
  \url{http://doi.org/10.1080/01621459.1996.10476968}.

\bibitem{Cook98}
{\sc R.~D. Cook}, {\em Regression Graphics: Ideas for Studying Regression
  through Graphics}, John Wiley \& Sons, Inc, New York, 1998,
  \url{http://doi.org/10.1002/9780470316931}.

\bibitem{Cook00a}
{\sc R.~D. Cook}, {\em {SAVE}: A method for dimension reduction and graphics in
  regression}, Communications in Statistics - Theory and Methods, 29 (2000),
  pp.~2109--2121, \url{http://doi.org/10.1080/03610920008832598}.

\bibitem{Cook09}
{\sc R.~D. Cook and L.~Forzani}, {\em Likelihood-based sufficient dimension
  reduction}, Journal of the American Statistical Association, 104 (2009),
  pp.~197--208, \url{http://doi.org/10.1198/jasa.2009.0106}.

\bibitem{Cook91}
{\sc R.~D. Cook and S.~Weisberg}, {\em Sliced inverse regression for dimension
  reduction: comment}, Journal of the American Statistical Association, 86
  (1991), pp.~328--332, \url{http://www.jstor.org/stable/2290566}.

\bibitem{Cowling57}
{\sc T.~Cowling and R.~B. Lindsay}, {\em Magnetohydrodynamics}, Physics Today,
  10 (1957), p.~40, \url{http://doi.org/10.1063/1.3060498}.

\bibitem{Folland99}
{\sc G.~B. Folland}, {\em Real Analysis: Modern Techniques and Their
  Applications}, John Wiley \& Sons Ltd, New York, 2nd~ed., 1999,
  \url{https://www.wiley.com/en-us/Real+Analysis%3A+Modern+Techniques+and+Their+Applications%2C+2nd+Edition-p-9780471317166}.

\bibitem{HandbookUQ}
{\sc R.~Ghanem, D.~Higdon, and H.~Owhadi}, {\em Handbook of Uncertainty
  Quantification}, Springer International Publishing, 2016,
  \url{http://doi.org/10.1007/978-3-319-11259-6}.

\bibitem{Glaws17c}
{\sc A.~Glaws and P.~G. Constantine}, {\em Gauss-christoffel quadrature for
  inverse regression: applications to computer experiments},
  arXiv:1710.01372v2,  (2017).

\bibitem{Glaws17a}
{\sc A.~Glaws, P.~G. Constantine, and R.~D. Cook}, {\em Inverse regression for
  ridge recovery: {Part I}}, arXiv:1702.02227,  (2017),
  \url{https://arxiv.org/abs/1702.02227}.

\bibitem{Glaws17b}
{\sc A.~Glaws, P.~G. Constantine, J.~Shadid, and T.~M. Wildey}, {\em Dimension
  reduction in magnetohydrodynamics power generation models: dimensional
  analysis and active subspaces}, Statistical Analysis and Data Mining, 10
  (2017), pp.~312--325, \url{http://doi.org/10.1002/sam.11355}.

\bibitem{Golub96}
{\sc G.~H. Golub and C.~F. Van~Loan}, {\em Matrix Computations}, JHU Press,
  Baltimore, 2nd~ed., 1996,
  \url{https://jhupbooks.press.jhu.edu/content/matrix-computations}.

\bibitem{Koehler96}
{\sc J.~R. Koehler and A.~B. Owen}, {\em Computer experiments}, Handbook of
  Statistics, 13 (1996), pp.~261--308,
  \url{http://doi.org/10.1016/S0169-7161(96)13011-X}.

\bibitem{Lehmann1998}
{\sc E.~L. Lehmann and G.~Casella}, {\em Theory of Point Estimation}, Springer,
  New York, 2nd~ed., 1998.

\bibitem{Li91}
{\sc K.~C. Li}, {\em Sliced inverse regression for dimension reduction},
  Journal of the American Statistical Association, 86 (1991), pp.~316--327,
  \url{http://doi.org/10.1080/01621459.1991.10475035}.

\bibitem{Pinkus15}
{\sc A.~Pinkus}, {\em Ridge Functions}, Cambridge University Press, 2015,
  \url{https://doi.org/10.1017/CBO9781316408124}.

\bibitem{Sacks89}
{\sc J.~Sacks, W.~J. Welch, T.~J. Mitchell, and H.~P. Wynn}, {\em Design and
  analysis of computer experiments}, Statistical Science, 4 (1989),
  pp.~409--423, \url{http://www.jstor.org/stable/2245858}.

\bibitem{dace2003}
{\sc T.~J. Santner, B.~J. Williams, and W.~I. Notz}, {\em The Design and
  Analysis of Computer Experiments}, Springer Science+Businuess Media New York,
  2003, \url{http://doi.org/10.1007/978-1-4757-3799-8}.

\bibitem{SmithUQ2013}
{\sc R.~C. Smith}, {\em Uncertainty Quantification: Theory, Implementation, and
  Applications}, SIAM, Philadelphia, 2013,
  \url{http://bookstore.siam.org/cs12/}.

\bibitem{Stewart90}
{\sc G.~W. Stewart}, {\em Stochastic perturbation theory}, SIAM Review, 4
  (1990), pp.~579--610, \url{https://doi.org/10.1137/1032121}.

\bibitem{SullivanUQ2015}
{\sc T.~Sullivan}, {\em Introduction to Uncertainty Quantification}, Springer,
  New York, 2015, \url{https://doi.org/10.1007/978-3-319-23395-6}.

\end{thebibliography}

\end{document}